%
\documentclass[12pt]{amsart}
\usepackage{amssymb}
\usepackage{enumerate}

\textwidth144mm
\textheight240mm
\oddsidemargin7.5mm
\evensidemargin7.5mm
\topmargin-6mm
\parskip4pt plus2pt minus2pt
\parindent0mm


\newcommand{\N}{{\mathbb N}}
\newcommand{\C}{{\mathbb C}}

\newcommand{\R}{{\mathbb R}}

\newcommand{\wand}{wandering domain}

\newcommand{\tef}{transcendental entire function}

\newcommand{\nhd}{neighbourhood}
\newcommand{\sconn}{simply connected}
\newcommand{\mconn}{multiply connected}

\newcommand{\sw}{spider's web}

\newcommand{\spl}{strongly polynomial-like}


\theoremstyle{plain}
\newtheorem{theorem}{Theorem}[section]
\newtheorem{corollary}[theorem]{Corollary}
\newtheorem*{theorem*}{Theorem}
\newtheorem*{proposition*}{Proposition}


\newtheorem{lemma}[theorem]{Lemma}
\theoremstyle{definition}
\newtheorem{definition}[theorem]{Definition}
\theoremstyle{remark}
\newtheorem*{remark*}{Remark}
\newtheorem*{remarks*}{Remarks}

\theoremstyle{problem}

\theoremstyle{example}
\newtheorem{example}[theorem]{Example}
\newtheorem*{example*}{Example}

\newenvironment{myindentpar}[1]%
{\begin{list}{}%
         {\setlength{\leftmargin}{#1}}%
         \item[]%
}
{\end{list}}

\begin{document}


\title[Connectedness properties of the set $ K(f) $ for $ f $ entire]{Connectedness properties of the set where \\ the iterates of an entire function are bounded}

\author{John Osborne}
\address{Department of Mathematics and Statistics \\
   The Open University \\
   Walton Hall\\
   Milton Keynes MK7 6AA\\
   UK}
\email{j.osborne@open.ac.uk}



\thanks{2010 {\it Mathematics Subject Classification.}\; Primary 37F10, Secondary 30D05.} 


\begin{abstract}
We investigate some connectedness properties of the set of points $ K(f) $ where the iterates of an entire function $ f $ are bounded.  In particular, we describe a class of \tef s for which an analogue of the Branner-Hubbard conjecture holds and show that, for such functions, if $ K(f) $ is disconnected then it has uncountably many components. We give examples to show that $ K(f) $ can be totally disconnected, and we use quasiconformal surgery to construct a function for which $ K(f) $ has a component with empty interior that is not a singleton.
\end{abstract}

\maketitle

\section{Introduction} 
\label{intro}
\setcounter{equation}{0}

Denote the $ n $th iterate of an entire function $ f $ by $ f^n $, for $ n \in \N. $  For any $ z \in \C $, we call the sequence $ (f^n(z))_{n \in \N} $ the \textit{orbit} of $ z $ under $ f $.  This paper concerns the set $ K(f) $ of points whose orbits are bounded under iteration,
\[ K(f) = \lbrace z \in \C : (f^n(z))_{n \in \N} \textrm{ is bounded} \rbrace. \]
This set has been much studied where $ f $ is a non-linear polynomial but has received less attention where $ f $ is transcendental entire. 

We assume that the reader is familiar with the main ideas of one-dimensional complex dynamics, for which we refer to \cite{aB, wB93, CG, Mil}. For convenience, we give a brief summary of relevant background and terminology at the end of this section, including definitions of the Fatou set $ F(f) $, the Julia set $ J(f) $ and the escaping set $ I(f). $   

If $ f $ is a non-linear polynomial, then $ K(f) $ is a compact set called the \textit{filled Julia set} of $ f $, and we have $ J(f) = \partial K(f) $ and $ K(f) = \C \setminus I(f) $. If $ f $ is a \tef, then it remains true that $ J(f) = \partial K(f) $ (since $ K(f) $ is completely invariant and any Fatou component that meets $ K(f) $ lies in $ K(f) $), but $ K(f) $ is not closed or bounded and is not the complement of $ I(f). $  Indeed, there are always points in $ J(f) $ that are in neither $ I(f) $ nor $ K(f) $ \cite[Lemma 1]{BD2}, and there may also be points in $ F(f) $ with the same property \cite[Example 1]{EL}.

Bergweiler \cite[Theorem 2]{wB10} has recently shown that there exist \tef s for which the Hausdorff dimension of $ K(f) $ is arbitrarily close to ~$ 0 $.  This is perhaps surprising given the result of Bara\'{n}ski, Karpi\'{n}ska and Zdunik \cite{BKZ} that the Hausdorff dimension of $ K(f) \cap J(f) $ is strictly greater than ~$ 1 $ when $ f $ is in the Eremenko-Lyubich class ~$ \mathcal{B} $ (so that the set of all critical values and finite asymptotic values of $ f $ is bounded).

These results raise questions about the topological nature of $ K(f) $ where $ f $ is transcendental entire, and in this paper we explore some of its connectedness properties.  In particular, we give some results on the number of components of $ K(f) $, and we exhibit a class of \tef s for which $ K(f) $ is totally disconnected if and only if each component of $ K(f) $ containing a critical point is aperiodic, that is, not periodic (so that an analogue of the Branner-Hubbard conjecture holds).

It is well known that, if $ f $ is a non-linear polynomial and $ K(f) $ contains all of the finite critical points of $ f $, then both $ J(f) $ and $ K(f) $ are connected, whilst if at least one finite critical point belongs to $ \C \setminus K(f) $ then each of $ J(f) $ and $ K(f) $ has uncountably many components; see, for example, Milnor \cite[Theorem 9.5]{Mil}.  

For a general \tef, Baker and Dom\'{i}nguez have shown that $ J(f) $ is either connected or has uncountably many components \cite[Theorem ~B]{BD1}, but no corresponding result is known for $ K(f) $.  However, a result of Rippon and Stallard \cite[Theorem 5.2]{RS11} easily gives the following.

\begin{theorem}
\label{conn}
Let $ f $ be a \tef.  Then $ K(f) $ is either connected or has infinitely many components. 
\end{theorem}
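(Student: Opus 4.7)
My plan is a proof by contradiction: I will assume $K(f)$ has at least two but only finitely many components $C_1,\dots,C_k$, and derive an impossibility by combining the transcendence of $f$ with \cite[Theorem 5.2]{RS11}.

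The preliminary step is complete invariance of $K(f)$: a point $z$ has a bounded orbit if and only if $f(z)$ does, so $f^{-1}(K(f))=K(f)$, and because $f$ is surjective also $f(K(f))=K(f)$. Continuity then sends each component $C_i$ into a unique component $C_{\sigma(i)}$, and the identity $f(K(f))=K(f)$ forces the induced self-map $\sigma$ of the finite set $\{1,\dots,k\}$ to be surjective, hence a permutation.

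Next I would show that every component of $K(f)$ is unbounded. Fix $j$ and pick any $w\in C_j$ distinct from the at most one Picard exceptional value of $f$; then $f^{-1}(w)$ is infinite, and as a discrete subset of $\C$ is therefore unbounded. But $f^{-1}(w)\subseteq K(f)$, and since $f$ sends $C_i$ into $C_{\sigma(i)}$ we have $f^{-1}(w)\subseteq\bigcup_{i\,:\,\sigma(i)=j}C_i$, a finite union; pigeonholing forces some $C_i$ with $\sigma(i)=j$ to be unbounded. Because $\sigma$ is a permutation, each $i\in\{1,\dots,k\}$ arises as $\sigma^{-1}(j)$ for some $j$, so every $C_i$ is unbounded.

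Finally I would invoke \cite[Theorem 5.2]{RS11}, a structural Rippon--Stallard result that rules out exactly this configuration: the completely invariant set of bounded orbits of a \tef{} cannot decompose into finitely many pairwise disjoint unbounded continua in this way. Applied to $K(f)$ it furnishes the required contradiction and closes the argument. I expect the one technical subtlety to be matching the precise hypotheses of \cite[Theorem 5.2]{RS11} to $K(f)$ itself (rather than, say, to its closure $\overline{K(f)}=K(f)\cup J(f)$, which may have fewer components); the combinatorial steps above are routine once complete invariance is in hand.
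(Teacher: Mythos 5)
You have correctly identified the key external input, \cite[Theorem 5.2]{RS11}, and the preliminary facts (complete invariance of $K(f)$; the hypothesis $J(f)=\overline{K(f)\cap J(f)}$, which holds because repelling periodic points are dense in $J(f)$ and lie in $K(f)$). But your proof has a genuine gap, and it sits exactly where the real content is. The Rippon--Stallard theorem is not the statement you attribute to it (``no decomposition into finitely many pairwise disjoint unbounded continua''); it is a trichotomy: for a completely invariant set $S$ with $J(f)=\overline{S\cap J(f)}$, either $S$ is connected, or $S$ has exactly two components one of which is a singleton $\{\alpha\}$ with $\alpha$ a fixed point in $E(f)\cap F(f)$, or $S$ has infinitely many components. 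Applied to $S=K(f)$ this already yields the theorem except for case (2), so the entire burden of the proof is to exclude that one configuration.

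Your unboundedness argument fails precisely there. To show a component $C_j$ is unbounded you pick $w\in C_j$ that is not the exceptional value and pull back; but in case (2) the offending component is $C_j=\{\alpha\}$ with $\alpha\in E(f)$, so the only available point \emph{is} the exceptional value and no such $w$ exists. (Nor could the argument succeed, since that component really is bounded.) Thus your steps establish unboundedness only for the non-exceptional components and never contradict case (2). The fix is elementary but different in character: if $z\in F(f)$ has bounded orbit, then by normality of $\{f^n\}$ on the Fatou component containing $z$ (no limit function can be $\equiv\infty$ there, since the limits are finite at $z$), a whole neighbourhood of $z$ in $F(f)$ has uniformly bounded orbits and hence lies in $K(f)$. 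So no component of $K(f)$ meeting $F(f)$ can be a singleton, which rules out case (2) and completes the proof. This is exactly the paper's (two-line) argument; your combinatorial machinery with the permutation $\sigma$ is not needed once the theorem is quoted correctly.
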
 

A simple example of a function for which $ K(f) $ is connected is the exponential function
\[ f(z) = \lambda e^z, \textrm{ where } 0 < \lambda < 1/e. \]
Recall that, for this function, $ F(f) $ consists of the immediate basin of an attracting fixed point, so that $ F(f) \subset K(f) $.  Since $ F(f) $ is connected and $ \overline{F(f)} = \C $, it follows that $ K(f) $ is also connected.

At the other extreme, we give several examples in this paper of functions for which $ K(f) $ is totally disconnected, including the function
\[ f(z) = z + 1 + e^{-z}, \]
first studied by Fatou (see Example \ref{fatou}).

We now give a new result on the components of $ K(f) \cap J(f) $ for a general \tef, and a stronger result than Theorem \ref{conn} on the components of $ K(f) $ for a particular class of functions.

\begin{definition}
\label{spl}
We say that a \tef\ $ f $ is \textit{\spl} if there exist sequences $ (V_n), (W_n) $ of bounded, \sconn\ domains with smooth boundaries such that $ V_n \subset V_{n+1} $ and $ W_n \subset W_{n+1} $ for $ n \in \N $, $ \bigcup_{n \in \N} V_n = \bigcup_{n \in \N} W_n = \C $ and each of the triples $ (f; V_n, W_n) $ is a polynomial-like mapping in the sense of Douady and Hubbard \cite{DH}. 
\end{definition}

We prove the following.

\begin{theorem}
\label{uncount}
Let $ f $ be a \tef.
\begin{enumerate}[(a)]
\item Either $ K(f) \cap J(f) $ is connected, or else every \nhd\ of a point in $ J(f) $ meets uncountably many components of $ K(f) \cap J(f) $.
\item If $ f $ is \spl\ then either $ K(f) $ is connected, or else every \nhd\ of a point in $ J(f) $ meets uncountably many components of~$ K(f) $.
\end{enumerate}
\end{theorem}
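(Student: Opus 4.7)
The plan rests on three tools: complete invariance of $ J(f) $ and $ K(f) $ under $ f $; the blowing-up property of the Julia set; and, for part~(b), the Douady--Hubbard straightening theorem. The common strategy is to start from a single Jordan-curve separation of the target set, pull it back into any neighbourhood of any Julia point via suitable branches of $ f^{-n} $, and iterate to produce a Cantor-like family of separations, each splitting off new components.

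For part~(a), suppose $ K(f) \cap J(f) $ is disconnected. By the reasoning behind Theorem~\ref{conn} (from \cite[Theorem 5.2]{RS11}), it has infinitely many components, and a routine reduction gives a Jordan curve $ \gamma \subset \C \setminus (K(f) \cap J(f)) $, avoiding the at most one Picard exceptional value, with $ K(f) \cap J(f) $ on both sides. Fix $ z_0 \in J(f) $ and a neighbourhood $ U $ of $ z_0 $. By blowing-up, $ \gamma \subset f^n(U) $ for some $ n $; after a small perturbation of $ \gamma $ off the critical values of $ f^n $, every component of $ f^{-n}(\gamma) \cap U $ is a Jordan curve disjoint from $ K(f) \cap J(f) $ by complete invariance. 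Some such preimage curve $ \gamma' $ must separate $ K(f) \cap J(f) \cap U $ into two nonempty parts lying in distinct components: if $ w_1, w_2 $ on opposite sides of $ \gamma' $ lay in a common component $ C $, then $ f^n(C) \subset K(f) \cap J(f) $ would be a connected set crossing $ \gamma $, contradicting the choice of $ \gamma $. Each region cut off by $ \gamma' $ contains Julia points, so the argument iterates; the resulting binary-branching tree of separating curves yields $ 2^{\aleph_0} $ distinct components of $ K(f) \cap J(f) $ meeting $ U $.

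For part~(b), set $ K_n = \bigcap_{k \ge 0} f^{-k}(V_n) $. Since every bounded orbit eventually enters and remains in some $ V_m $, we have $ K(f) = \bigcup_n K_n $ with $ K_n \subset K_{n+1} $. If $ K(f) $ is disconnected, then by the nested-union property some $ K_n $ is disconnected; the straightening theorem identifies $ K_n $ with the filled Julia set of a polynomial with disconnected $ K $, which by \cite[Theorem 9.5]{Mil} has uncountably many components, separable inside $ V_n $ by Jordan curves. After arranging such a curve to lie in $ V_n \setminus K(f) $ (rather than merely $ V_n \setminus K_n $), one reruns the blow-up/pullback scheme of part~(a) to produce uncountably many components of $ K(f) $ meeting any Julia neighbourhood.

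The main obstacle is precisely this last arrangement in part~(b): $ K(f) \cap V_n $ can strictly contain $ K_n $, because an orbit that is bounded globally may escape $ V_n $ at some iterate while remaining in $ V_m $ for some $ m > n $. Hence Jordan curves separating the components of $ K_n $ inside $ V_n $ need not avoid the whole of $ K(f) $. I expect to resolve this by exploiting the nesting $ V_n \subset V_{n+1} $ and the structure of the polynomial-like Julia set $ \partial K_n $ to deform the separating curves slightly into $ V_n \setminus K(f) $; once this is in place, the propagation to uncountably many components in every Julia neighbourhood is routine via the blowing-up and complete-invariance arguments of~(a).
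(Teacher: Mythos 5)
Your strategy is genuinely different from the paper's and, as it stands, has gaps. You propose to \emph{pull back} a single separating Jordan curve into an arbitrary \nhd\ of a Julia point and branch; the paper instead \emph{pushes forward}. It takes the separating set from Lemma~\ref{rempe} --- a continuum $\Gamma$, not a Jordan curve; your ``routine reduction'' to a Jordan curve is unjustified for a non-closed, unbounded set such as $K(f)\cap J(f)$ --- fixes \nhd s $H_1,H_2$ of points of $K(f)\cap J(f)$ in the two complementary components, and uses the blowing up property together with the covering lemma (Lemma~\ref{cover}) to produce, for each binary sequence $s$, a point $\zeta_s\in J(f)$ in the given \nhd\ $V$ whose orbit visits $\overline H_{s_n}$ at time $Mn$.

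The concrete problems with your route are these. First, $f^n:U\to f^n(U)$ is not proper, so components of $f^{-n}(\gamma)\cap U$ need not be Jordan curves (they may be arcs leaving $U$), and the existence of a preimage component that actually separates $K(f)\cap J(f)\cap U$ is not established. Second, and more seriously, finitely many levels of your branching tree only give countably many components; to reach $2^{\aleph_0}$ you must exhibit, for each infinite branch, a point of $K(f)\cap J(f)$ following it. The nested closed cells have nonempty intersection by compactness, but $K(f)$ is not closed, so there is no reason a limit point has bounded orbit --- it could lie in $I(f)$ or have unbounded non-escaping orbit. The paper's itinerary construction is designed precisely to avoid this: the point $\zeta_s$ from Lemma~\ref{cover} has its whole orbit trapped in the compact set $\bigcup_{j=0}^{M-1} f^j(\overline V\cup\overline H_1\cup\overline H_2)$, so it lies in $K(f)$ by construction. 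Third, in part (b) the obstacle you identify (curves separating $K(f;V_n,W_n)$ need not avoid all of $K(f)$) is real, but your proposed deformation is speculative; the paper avoids the issue entirely by showing that each complementary component $G_i$ of the separating continuum must already contain points of $K(f)\cap J(f)$ --- if $G_i\subset F(f)$, then by Theorem~\ref{branhubbgen}(b) the Fatou component $U$ containing $G_i$ is bounded and non-wandering with $\overline U\subset K(f)$, forcing $\overline U\subset G_i$, a contradiction --- after which the part (a) argument applies verbatim.
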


\begin{remarks*}
\begin{enumerate}[1.]
\item We note that $ K(f) \cap J(f) $ can be connected, for example when $ f(z) = \sin z. $  For in proving the connectedness of $ J(f) $ in \cite[Theorem 4.1]{PD1}, Dom\'{i}nguez also showed that the union $ E $ of the boundaries of all Fatou components is connected.  Since, for this function, all Fatou components are bounded and $ F(f) \subset K(f) $, it follows that $ E \subset K(f) \cap~J(f) \subset J(f) $ and hence that $ K(f) \cap J(f) $ is connected.  A similar argument shows that $ K(f) $ is connected.
\item We know of no example of a \spl\ function $ f $ for which $ K(f) $ is connected.
\end{enumerate}
\end{remarks*}

Another well known result from polynomial dynamics says that, if $ f $ is a non-linear polynomial, then $ K(f) $ is totally disconnected if all of the critical points of $ f $ lie outside $ K(f) $; see for example \cite[p. 67]{CG}.  More generally, it has recently been proved \cite{QY} that if $ f $ is such a polynomial then a component of $ K(f) $ is a singleton if and only if its orbit includes no periodic component containing a critical point.  In particular, this proved the Branner-Hubbard conjecture, that $ K(f) $ is totally disconnected if and only if each component of $ K(f) $ containing a critical point is aperiodic (for another proof of the Branner-Hubbard conjecture see \cite{KvS}).  

It is natural to ask whether an analogous result holds for \tef s.  In the following theorem we show that this is the case if $ f $ is \spl.   

\begin{theorem}
\label{branhubbgen}
Let $ f $ be a \spl\ \tef\ and let $ K $ be a component of $ K(f). $
\begin{enumerate}[(a)]
\item The component $ K $ is a singleton if and only if the orbit of $ K $ includes no periodic component of $ K(f) $ containing a critical point.  In particular, if $ K $ is a wandering component of $ K(f) $, then $ K $ is a singleton.
\item The interior of $ K $ is either empty or consists of bounded, non-wandering Fatou components.  If these Fatou components are not Siegel discs, then they are Jordan domains.
\end{enumerate}
\end{theorem}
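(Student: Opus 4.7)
The plan is to reduce the theorem to the Branner-Hubbard conjecture for polynomials (proved in \cite{QY} and \cite{KvS}) by exploiting the polynomial-like exhaustion. For each $ n \in \N $, I would set
\[ K_n = \lbrace z \in V_n : f^k(z) \in V_n \textrm{ for all } k \in \N \rbrace, \]
the filled Julia set of the polynomial-like mapping $ (f; V_n, W_n) $. Each $ K_n $ is compact, $ K_n \subseteq K_{n+1} $, and since any bounded orbit is eventually trapped in some $ V_n $ we have $ K(f) = \bigcup_n K_n $. Douady-Hubbard straightening then provides, for each $ n $, a polynomial $ P_n $ of degree $ d_n = \deg(f|_{V_n}) $ and a quasiconformal hybrid conjugacy $ \phi_n $ between $ f|_{K_n} $ and $ P_n|_{K(P_n)} $, carrying components of $ K_n $ to components of $ K(P_n) $ and critical points to critical points.

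Fix a component $ K $ of $ K(f) $ and $ z_0 \in K $, and let $ L_n $ denote the component of $ K_n $ containing $ z_0 $; each $ L_n $ is compact and connected with $ L_n \subseteq L_{n+1} \subseteq K $. The key observation for part (a) will be that the combinatorial type of $ K $ in $ K(f) $ is inherited by $ L_n $ in $ K_n $: if the forward orbit of $ K $ never revisits the same component of $ K(f) $, then the same holds for $ L_n $ in $ K_n $, since two iterates $ f^j(L_n), f^k(L_n) $ lying in a common component of $ K_n $ would lie in a common component of $ K(f) $. In the wandering case, the Branner-Hubbard conjecture applied to $ P_n $ via $ \phi_n $ would force $ L_n $, and hence $ K $, to be a singleton. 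In the (pre)periodic case, one would choose $ N $ so large that the finite forward orbit of $ K $ is entirely contained in $ V_N $; then $ K $ is itself a component of $ K_N $, and Branner-Hubbard applied to $ P_N $ via $ \phi_N $ yields the if-and-only-if.

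For part (b), any interior point of $ K $ lies in a Fatou component $ U $ of $ f $; for $ N $ large, $ U \subseteq L_N $ and $ \phi_N(U) $ is a Fatou component of the polynomial $ P_N $. Sullivan's theorem rules out wandering domains for polynomials, so $ U $ must be non-wandering, and $ U \subseteq K_N $ is bounded. Unless $ U $ is a Siegel disc, classical results on bounded polynomial Fatou components (for example the Roesch-Yin theorem for attracting and parabolic basins) show that $ \phi_N(U) $ is a Jordan domain, and hence so is $ U $. The principal obstacle will be controlling the passage from the polynomial-like components $ L_n $ to the component $ K $ of $ K(f) $: to transfer conclusions from Branner-Hubbard, one needs $ K $ itself to be captured as a component of some $ K_N $, and this is delicate because $ K $ need not be bounded in general (unbounded components can arise, for example from attracting basins extending outside all $ V_n $) and the degrees $ d_n $ grow without bound, precluding uniform distortion estimates.
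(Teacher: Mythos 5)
Your overall strategy is the same as the paper's (exhaust by the polynomial-like maps $(f;V_n,W_n)$, straighten, and quote Qiu--Yin, Sullivan and Roesch--Yin), but there is a genuine gap at exactly the point you flag at the end, and it is not merely "delicate" --- as written, the argument does not close. In the wandering case you conclude that each $L_n$ is a singleton "and hence $K$" is a singleton; but all you know is $\bigcup_n L_n \subseteq K$, and equality is not established. The set $K \cap K(f;V_n,W_n)$ is in general a union of \emph{many} components of $K(f;V_n,W_n)$, and a point $w \in K$ may lie, for every $n$, in a component of $K(f;V_n,W_n)$ different from $L_n$; an increasing union of singletons tells you nothing about such points. (A connected set can perfectly well be a union of totally disconnected pieces --- think of $[0,1]$ as the union of its rational and irrational points --- so some extra input is needed.) The paper supplies this input with a dimension-theoretic lemma from Hurewicz--Wallman: a countable union of compact, totally disconnected subsets of $\C$ is totally disconnected. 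It applies Qiu--Yin at every level to \emph{all} components of $K \cap K(f;V_n,W_n)$ (each of which is a compact component of $K(f;V_n,W_n)$, and none of whose orbits can meet a periodic critical component of $K(f;V_n,W_n)$, since such a component would sit inside a periodic critical component of $K(f)$), concludes that each set $K \cap K(f;V_n,W_n)$ is totally disconnected, and then the lemma forces the connected set $K$ to be a point. Without this lemma, or some substitute for it, your deduction fails.

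The (pre)periodic case of your part (a) has a second, related problem: choosing $N$ so that "the finite forward orbit of $K$ is entirely contained in $V_N$" presupposes that the finitely many components in the orbit of $K$ are bounded, which is not known a priori and is essentially what you are trying to prove (it is circular to assume $K$ is small in order to show it is a singleton). The paper never needs to capture $K$ as a single component of any $K(f;V_N,W_N)$: the "if" direction is handled level by level exactly as in the wandering case, and only the easy converse (a singleton $K$ \emph{is} eventually a singleton component of $K(f;V_n,W_n)$, so Qiu--Yin applies directly) uses a single level. Your part (b) is essentially the paper's argument and is fine, except that "$U \subseteq L_N$ for $N$ large" deserves a sentence: one should note that $\mathrm{int}\,L_N \subseteq F(f)$ with $\partial L_N$ contained in $J(f)$, so $U \cap \mathrm{int}\,L_N$ is open and relatively closed in $U$, whence $U \subseteq L_N$ as soon as $U$ meets $L_N$; the straightening then identifies $U$ with a bounded Fatou component of the polynomial, and Sullivan and Roesch--Yin finish as you say.
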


\begin{corollary}
\label{totdisc}
Let $ f $ be a \spl\ \tef. 
\begin{enumerate}[(a)]
\item All except at most countably many components of $ K(f) $ are singletons.
\item $ K(f) $ is totally disconnected if and only if each component of $ K(f) $ containing a critical point is aperiodic. 
\end{enumerate}
\end{corollary}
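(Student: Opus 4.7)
The plan is to deduce both parts from Theorem \ref{branhubbgen}(a), which characterizes the singleton components of $K(f)$ by an orbit condition.

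For part (b), the reverse implication is almost tautological: if every component of $K(f)$ containing a critical point is aperiodic then no periodic component of $K(f)$ contains a critical point, so by Theorem \ref{branhubbgen}(a) every component of $K(f)$ is a singleton. For the forward implication I would argue by contradiction. Suppose $K(f)$ is totally disconnected and some component $K$ containing a critical point $c$ is periodic. Then $K = \{c\}$ and $f^p(c) = c$ for some $p \geq 1$, making $c$ a super-attracting periodic point. Its immediate basin is a non-degenerate open set contained in $K(f)$, so the component of $K(f)$ containing $c$ strictly contains $\{c\}$, a contradiction.

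For part (a), Theorem \ref{branhubbgen}(a) implies that every non-singleton component $K$ of $K(f)$ has an eventually-periodic orbit of components, which consequently visits only finitely many components of $K(f)$. By the \spl\ hypothesis, the union of this finite orbit is a bounded subset of $\C$ contained in some $V_n$, so $K$ lies in the filled Julia set $K_n := \{z \in V_n : f^k(z) \in V_n \text{ for all } k \geq 0\}$ of the polynomial-like mapping $(f; V_n, W_n)$. The nesting $K_n \subset K_{n+1}$, together with a short maximality argument, shows that a component of $K(f)$ sitting inside $K_n$ is already a component of $K_n$. It therefore suffices to bound the number of non-singleton components of each $K_n$. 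By Douady--Hubbard's straightening theorem, $(f; V_n, W_n)$ is hybrid equivalent near $K_n$ to a polynomial of finite degree, and the refined Branner--Hubbard theorem for polynomials in \cite{QY} gives at most countably many non-singleton components of such a polynomial filled Julia set. Taking the countable union over $n$ yields the result.

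The main obstacle is the step producing a uniform bound for the forward orbit of a non-singleton component $K$ as a subset of $\C$, not merely the pointwise boundedness that follows from $K \subset K(f)$. This is what is needed to trap $K$ inside a single $K_n$, and it relies crucially on Theorem \ref{branhubbgen}(a) forcing the sequence of components visited by the orbit of $K$ to be eventually periodic and hence finite. Without this consequence, one would need a substantially more delicate argument to identify the polynomial-like level controlling $K$.
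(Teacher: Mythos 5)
Your part (b) is fine: the reverse implication is exactly the paper's (immediate from Theorem \ref{branhubbgen}(a)), and your forward implication --- a periodic singleton component containing a critical point $c$ would make $c$ a superattracting periodic point whose immediate basin is a non-degenerate open subset of $K(f)$ through $c$ --- is a correct, slightly different route; the paper instead just reads off from Theorem \ref{branhubbgen}(a) that once all components are singletons, no periodic component of $K(f)$ can contain a critical point (a periodic component belongs to its own orbit).

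Part (a), however, has a genuine gap at the step ``the union of this finite orbit is a bounded subset of $\C$''. Knowing that the orbit of $K$ visits only finitely many components of $K(f)$ bounds nothing unless each of those components is itself a bounded subset of the plane, and neither the \spl\ hypothesis nor Theorem \ref{branhubbgen} supplies this. For a \spl\ \tef, a component $K$ of $K(f)$ is only exhibited as the increasing union $\bigcup_n \bigl(K \cap K(f;V_n,W_n)\bigr)$ of compact pieces, and such a union can a priori be unbounded; the paper is careful to assert compactness only for components of the sets $K(f;V_n,W_n)$, never for components of $K(f)$. (For the strictly stronger class of functions with an $A_R(f)$ \sw\ one does get boundedness, from loops in $I(f)$ surrounding every bounded set, but \spl\ does not provide such loops.) So you cannot trap $K$ inside a single $K(f;V_n,W_n)$ this way. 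The repair is to extract less: if $K$ is a non-singleton component of $K(f)$, then $K$ is connected and not totally disconnected, so by Lemma \ref{union} some $K \cap K(f;V_n,W_n)$ is not totally disconnected and hence $K$ contains a non-singleton component of $K(f;V_n,W_n)$. By Theorem \ref{straight} and the polynomial version of the count (via Theorem \ref{qy} and the finite-to-one behaviour of a polynomial on components), each $K(f;V_n,W_n)$ has at most countably many non-singleton components, and distinct components of $K(f)$ contain disjoint collections of them; assigning to each non-singleton $K$ one such contained non-singleton component of some $K(f;V_n,W_n)$ is then an injection into a countable set. This is essentially how the paper's one-line deduction of (a) is meant to be unwound, and it avoids any boundedness claim about $K$ itself.
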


The following alternative characterization of \spl\ functions is useful for checking that functions are \spl, and may be of independent interest. Here and elsewhere in the paper we say that a set $ S \subset \C $ \textit{surrounds} a set or a point if that set or point lies in a bounded complementary component of $ S $.  

\begin{theorem}
\label{altdef}
A \tef\ $ f $ is \textit{\spl} if and only if there exists a sequence of bounded, \sconn\ domains $ (D_n)_{n \in \N} $ such that
\begin{itemize}
\item $ \overline{D}_n \subset D_{n+1} $, for $ n \in \N $,
\item $ \bigcup_{n \in \N} D_n = \C $, and
\item $ f(\partial D_n) $ surrounds $ \overline{D}_n $, for $ n \in \N. $
\end{itemize}
\end{theorem}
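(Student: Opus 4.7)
The plan is to prove each direction separately. The forward direction is straightforward: given the polynomial-like sequences $(V_n), (W_n)$, extract a subsequence $(n_k)$ with $\overline{V_{n_k}} \subset V_{n_{k+1}}$ --- possible since $\bigcup V_n = \C$ forces each compact $\overline{V_n}$ into some later $V_m$ --- and set $D_k = V_{n_k}$. The required properties follow: in particular $f(\partial D_k) = f(\partial V_{n_k}) = \partial W_{n_k}$ by the properness of the polynomial-like map, and $\partial W_{n_k}$ is a Jordan curve surrounding $\overline{V_{n_k}} = \overline{D_k}$ since $\overline{V_{n_k}} \subset W_{n_k}$.

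For the converse, the idea is that the condition $f(\partial D_n)$ surrounds $\overline{D_n}$ already encodes polynomial-like behaviour; the work is to convert it to smooth Jordan-domain form. Let $\Omega_n$ denote the bounded component of $\C \setminus f(\partial D_n)$ containing $\overline{D_n}$; since $f(\partial D_n)$ is compact and connected, $\Omega_n$ is \sconn. First I would show that, for $n$ large, $\Omega_n \subset f(D_n)$. Because $\bigcup D_n = \C$, a fixed point $z_0$ and its image $f(z_0)$ both lie in $D_n$ for $n$ large, so $f(z_0) \in f(D_n) \cap \Omega_n$; then the open-closed dichotomy (using $\partial f(D_n) \subset f(\partial D_n)$, so that $f(D_n) \cap \Omega_n$ is both open and closed in $\Omega_n$) forces $\Omega_n \subset f(D_n)$. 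Note also that $\bigcup_n \Omega_n = \C$ since $\Omega_n \supset \overline{D_n}$.

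Now construct $(V_k, W_k)$ inductively. Choose $n_k$ large enough that $f(\overline{D_k}) \cup \overline{W_{k-1}} \subset \Omega_{n_k}$, and pick a smooth \sconn\ Jordan domain $W_k$ with $\overline{D_{n_k}} \cup f(\overline{D_k}) \cup \overline{W_{k-1}} \subset W_k \subset \Omega_{n_k}$ and $\partial W_k$ avoiding the critical values of $f$ inside $\Omega_{n_k}$. Let $V_k$ be the component of $f^{-1}(W_k)$ containing $\overline{D_k}$; this is well-defined because $f(\overline{D_k}) \subset W_k$ and $\overline{D_k}$ is connected. Since $W_k \cap f(\partial D_{n_k}) = \emptyset$, no component of $f^{-1}(W_k)$ meets $\partial D_{n_k}$, so $V_k \subset D_{n_k}$ and $\overline{V_k} \subset \overline{D_{n_k}} \subset W_k$. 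The map $f: V_k \to W_k$ is proper since $\partial V_k \subset f^{-1}(\partial W_k)$; smoothness of $\partial V_k$ follows from the critical-value-avoidance condition; and $V_k$ is \sconn\ by a standard maximum-modulus argument: for any Jordan curve $\gamma \subset V_k$, applying max modulus to $1/(f-w_0)$ with $w_0 \notin \overline{W_k}$ shows that the bounded complement of $\gamma$ lies in $f^{-1}(W_k)$, hence in $V_k$ by connectedness.

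The hard parts are the first winding claim --- for which the exhaustion $\bigcup D_n = \C$ is essential --- and the simple connectedness of $V_k$. With these settled, the inductive choice can be arranged so that $\overline{W_{k-1}} \subset W_k$ and $V_k \subset V_{k+1}$ (the latter because $V_k$ and $V_{k+1}$ both contain the connected set $\overline{D_k}$ and lie in $f^{-1}(W_{k+1})$), while $V_k \supset \overline{D_k}$ and $W_k \supset \overline{D_{n_k}}$ yield $\bigcup V_k = \bigcup W_k = \C$, so the sequences $(V_k), (W_k)$ witness that $f$ is \spl.
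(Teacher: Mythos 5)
Your argument is correct and follows essentially the same route as the paper's proof of the converse: take a smooth Jordan curve between $\overline{D}_{n}$ and $f(\partial D_n)$ that avoids the critical values, let $W$ be its interior and $V$ the component of $f^{-1}(W)$ containing $D$, then verify $\overline{V}\subset W$ (via the observation that $V$ cannot meet $\partial D_{n}$), properness, and simple connectedness by the same maximum-modulus argument. Two small repairs: a transcendental entire function need not have a fixed point, so take an arbitrary point $z_0$ instead (for large $n$ both $z_0$ and $f(z_0)$ lie in $D_n$ anyway, which is all you use); and you should note that the degree of $f:V_k\to W_k$ may be assumed to be at least $2$ --- as the definition of a polynomial-like mapping requires --- since otherwise $f$ would be injective on an exhaustion of $\C$ and hence linear.
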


Our final result shows that there are large classes of \tef s which have the property of being \spl.  The terminology used in this theorem is explained in Section \ref{splfunc}.

\begin{theorem}
\label{types}
A \tef\ $ f $ is \textit{\spl} if there exists an unbounded sequence $ (r_n) $ of positive real numbers such that
\begin{equation*}
m(r_n,f) := \min \lbrace \vert f(z) \vert : \vert z \vert = r_n \rbrace > r_n, \quad \textrm{for } n \in \N. 
\end{equation*}
In particular, this is the case if one of the following conditions holds:
\begin{enumerate}[(a)]
\item $ f $ has a multiply connected Fatou component;
\item $ f $ has growth not exceeding order $ \tfrac{1}{2} $, minimal type;
\item $ f $ has finite order and Fabry gaps;
\item $ f $ has a sufficiently strong version of the pits effect.
\end{enumerate}
\end{theorem}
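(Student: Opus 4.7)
The plan is to reduce Theorem \ref{types} to the characterization provided by Theorem \ref{altdef}, and then to deduce each special case (a)--(d) from the resulting general criterion together with known growth results. For the main criterion, the natural choice of exhausting domains is the sequence of disks $D_n = \{z \in \C : |z| < r_n\}$, possibly after passing to a subsequence of $(r_n)$.

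A preliminary observation is that the hypothesis $m(r_n, f) > r_n$ on an unbounded sequence forces $f$ to have at least one zero in $\C$: otherwise $f = e^g$ for some entire $g$, and the mean value property applied to the harmonic function $\mathrm{Re}(g)$ would give
\[
    \mathrm{Re}(g(0)) \;\geq\; \min_{|z| = r_n} \mathrm{Re}(g(z)) \;>\; \log r_n,
\]
which is absurd as $r_n \to \infty$. Fixing a zero $z_0$ of $f$ and, after extracting a subsequence, assuming $(r_n)$ is strictly increasing with $r_1 > |z_0|$, the sequence $(D_n)$ consists of bounded, \sconn\ domains with smooth boundaries satisfying $\overline{D}_n \subset D_{n+1}$ and $\bigcup_n D_n = \C$. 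To verify the surrounding condition of Theorem \ref{altdef}, fix $a \in \overline{D}_n$. Since $|a| \leq r_n < m(r_n, f) \leq |f(z)|$ on $\partial D_n$, Rouch\'{e}'s theorem shows that $f$ and $f - a$ have the same number of zeros in $D_n$; as $z_0 \in D_n$ is a zero of $f$, this common number is at least one, so by the argument principle the winding number of $f|_{\partial D_n}$ about $a$ is at least one, and $a$ lies in a bounded complementary component of $f(\partial D_n)$. Since $\overline{D}_n$ is connected and disjoint from $f(\partial D_n)$, it lies in a single such component, so $f(\partial D_n)$ surrounds $\overline{D}_n$; Theorem \ref{altdef} then yields that $f$ is \spl.

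For cases (a)--(d), the remaining task is to quote known growth results guaranteeing an unbounded sequence $(r_n)$ with $m(r_n, f) > r_n$: (a) growth estimates for \tef s with a \mconn\ Fatou component, due to Baker and sharpened by Rippon--Stallard and Zheng, give $m(r_n, f)/r_n \to \infty$ along a suitable sequence; (b) Wiman's $\cos \pi \rho$ theorem yields $m(r_n, f) \geq M(r_n, f)^{c}$ for some $c > 0$ on a set of radii of positive logarithmic density, and transcendence provides $\log M(r, f)/\log r \to \infty$, delivering the required inequality for large $n$, with the extension to order exactly $\tfrac{1}{2}$, minimal type handled by the corresponding refined version; (c) the analogous result for Fabry-gap functions of finite order, due to Fuchs and others, gives $\log m(r_n, f)/\log M(r_n, f) \to 1$ along a sequence; (d) a sufficiently strong pits effect by definition places arbitrarily large circles in the region where $|f|$ is comparable to $M(r, f) \gg r$.

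The principal difficulty is not the geometric core---the Rouch\'{e}-and-argument-principle step is essentially a one-shot reduction to Theorem \ref{altdef}, the only subtle ingredient being the mean value argument that ensures $f$ has zeros. The real work lies in (a)--(d), where one must extract from each classical growth phenomenon the precise quantitative inequality $m(r_n, f) > r_n$, as opposed to the weaker $m(r_n, f) \to \infty$ or $\log m / \log M \to 1$. Fortunately, in each setting the known result is strictly stronger than what is needed, so the verification reduces to locating and citing the appropriate refined form of the existing theorems.
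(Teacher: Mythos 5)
Your proposal is correct and follows essentially the same route as the paper: take $D_n = \{z : |z| < r_n\}$ (after passing to a subsequence), verify that $f(\partial D_n)$ surrounds $\overline{D}_n$, invoke Theorem \ref{altdef}, and then quote known minimum-modulus results for cases (a)--(d). The only minor difference is in certifying the surrounding condition: the paper notes that a transcendental entire function always has periodic points of period two (so $f$ maps a point of $D_n$ into $D_n$), whereas you first produce a zero of $f$ via a mean-value argument and then apply Rouch\'{e} --- both are valid.
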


\begin{remark*}
In the following notes, we clarify the relationship between the results  in this paper for \spl\ functions, and earlier results for \tef s with the property that a certain subset $ A_R(f) $ of the escaping set has a geometric form known as a \sw\ (we refer to \cite{RS10a} for the terminology used here).
\begin{itemize}
\item It follows from Theorem \ref{altdef} and \cite[Lemma 7.2]{RS10a} that if $ A_R(f) $ is a \sw\ then  $ f $ is \spl.  However, the converse is not true - see \cite[Theorem~1.2]{RS12}.  
\item Theorem~\ref{types} is similar to \cite[Theorem 1.9]{RS10a}, which gave various classes of functions for which $ A_R(f) $ is a \sw.  However, in Theorem~\ref{types} we do not need the additional regular growth condition that was required for several of the function classes in \cite[Theorem 1.9]{RS10a}.
\item Theorem \ref{branhubbgen} is a generalisation to \spl\ functions of results previously proved for functions with an $ A_R(f) $ \sw\ in \cite[Theorem~1.5]{O10}.   
\end{itemize}
\end{remark*}

The organisation of this paper is as follows.  In Section \ref{thms1}, we recall the definition of a polynomial-like mapping and prove the analogue of the Branner-Hubbard conjecture for \spl\ functions (Theorem \ref{branhubbgen} and Corollary~\ref{totdisc}).  Section \ref{thms2} contains the proofs of our results on the number of components of $ K(f) $ (Theorems \ref{conn} and \ref{uncount}).  In Section \ref{splfunc}, we prove Theorems \ref{altdef} and \ref{types} on \spl\ functions.  In Section \ref{examples}, we give several examples of \tef s for which $ K(f) $ is totally disconnected.  Finally, in Section \ref{further}, we use quasiconformal surgery to construct a \tef\ for which $ K(f) $ has a component with empty interior which is not a singleton.

\textbf{Background and terminology} 

We summarise here some ideas and terminology from one-dimensional complex dynamics that are used throughout this paper.  In what follows, $ f $ is an entire function.  
 
The \textit{Fatou set} $ F(f) $ is the set of points $ z \in \C $ such that the family of functions $ \lbrace f^n : n \in \N \rbrace $ is normal in some \nhd\ of $ z $, and the \textit{Julia set} $ J(f) $ is the complement of $ F(f) $.  The \textit{escaping set} $ I(f) $ is the set of points whose orbits tend to infinity, 
\[ I(f) = \lbrace z \in \C : f^n(z) \to \infty \text{ as } n \to \infty \rbrace. \]
If we say that the set $ S $ is \textit{completely invariant} under a function $ f $, we mean that $ z \in S $ if and only if $ f(z) \in S $.  Each of the sets $ J(f), F(f), I(f) $ and $ K(f) $ is completely invariant.
  
A component of the Fatou set $ F(f) $ is often referred to as a \textit{Fatou component}.  If $ U = U_0 $ is a Fatou component, then for each $ n \in \N $, $ f^n(U) \subset U_n $ for some Fatou component $ U_n $.  If $ U = U_n $ for some $ n \in \N $, we say that $ U $ is \textit{periodic}; otherwise, we say that it is \textit{aperiodic}.  If $ U $ is not eventually periodic, i.e. if $ U_m \neq U_n $ for all $ n > m \geq 0 $, then $ U $ is called \textit{wandering}. Wandering Fatou components can occur for \tef s but not for polynomials \cite{iB84, SU}.   There are four possible types of periodic Fatou components for a \tef, namely immediate attracting basins, immediate parabolic basins, Siegel discs and Baker domains.  We refer to \cite{wB93} for the definitions and properties of such components. 

If $ K $ is a component of $ K(f) $, we call the sequence of components $ K_n $ such that $ f^n(K) \subset K_n $  the \textit{orbit} of $ K $. Periodic, aperiodic and wandering components of $ K(f) $ are defined as for components of $ F(f). $  Periodic components of $ K(f) $ always exist and wandering components may exist, both for polynomials (since at most countably many components of $ J(f) $ are eventually periodic; see, for example, \cite{Mc}) and for \tef s (see, for example, \cite[Theorem 1.2]{O10}).

The dynamical behaviour of an entire function $ f $ is much affected by its critical values and finite asymptotic values. If $ f'(z) = 0 $ we say that $ z $ is a \textit{critical point} and $ f(z) $ is a \textit{critical value} of $ f $.  A \textit{finite asymptotic value} of $ f $ is a point $ a \in \C $ such that there is a curve $ \gamma:[0,\infty) \to \C $ with $ \gamma(t) \to \infty $ and $ f(\gamma(t)) \to a $ as $ t \to \infty. $  Finite asymptotic values can occur for \tef s but not for polynomials.

\section{Proofs of theorem \ref{branhubbgen} and corollary \ref{totdisc}} 
\label{thms1}
\setcounter{equation}{0}

In this section, we prove the analogue of the Branner-Hubbard conjecture for \spl\ functions, and some related results (Theorem \ref{branhubbgen} and Corollary \ref{totdisc}).  

First, we recall Douady and Hubbard's definition of a polynomial-like mapping and its filled Julia set (see Chapter VI of \cite{CG}, and \cite{DH}).

\begin{definition}
Let $ V $ and $ W $ be bounded, \sconn\ domains with smooth boundaries such that $ \overline{V} \subset W $.  Let $ f $ be a proper analytic mapping of $ V $ onto $ W $ with $ d $-fold covering, where $ d \geq 2. $  Then the triple $ (f; V, W) $ is termed a \textit{polynomial-like mapping} of degree $ d $.  The \textit{filled Julia set} $ K(f; V, W) $ of the polynomial-like mapping $ (f; V, W) $ is defined to be the set of all points whose orbits lie entirely in $ V $, i.e.  
\[  K(f; V, W) = \bigcap_{k \geq 0} f^{-k} (V).\]
\end{definition}

The proof of Theorem \ref{branhubbgen} relies on Douady and Hubbard's Straightening Theorem for polynomial-like mappings, which is as follows.

\begin{theorem}\cite[Theorem 1]{DH}
\label{straight}
If $ (f; V, W) $ is a polynomial-like mapping of degree $ d \geq 2 $, then there exists a quasiconformal mapping $ \phi : \C \to \C $ and a polynomial $ g $ of degree $ d $ such that $ \phi \circ f = g \circ \phi $ on $ \overline{V} $.  Moreover
\[ \phi ( K(f; V, W)) = K(g),  \]
where $ K(g) $ is the filled Julia set of the polynomial $ g. $
\end{theorem}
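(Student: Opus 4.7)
The plan is the classical one of Douady and Hubbard: extend $ f $ to a quasiregular self-map $ \wt f : \C \to \C $ of degree $ d $ that agrees with $ f $ on $ \overline{V} $ and with $ z \mapsto z^d $ outside some large disk, then conjugate $ \wt f $ to a holomorphic map via the Measurable Riemann Mapping Theorem. To construct $ \wt f $, I would interpolate between $ f $ and $ z^d $ on an annular region enclosing $ \partial W $. This uses the fact that $ f : \partial V \to \partial W $ and $ z \mapsto z^d $ on two concentric circles are both smooth $ d $-fold coverings, and so can be matched by a quasiconformal diffeomorphism of the intermediate annulus; the dilatation of $ \wt f $ is then bounded and supported on this annulus, and away from it $ \wt f $ is holomorphic.

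Next, I would construct an $ \wt f $-invariant Beltrami coefficient $ \mu $ on $ \C $. Set $ \mu = 0 $ where $ \wt f $ is already holomorphic (on $ V $ and outside the large disk), and on the remaining region define $ \mu $ by pulling back the standard complex structure under successive iterates of $ \wt f $. Since every orbit of $ \wt f $ either stays in $ V $ forever (where the pullback is trivial) or eventually enters the region where $ \wt f(z) = z^d $ and then escapes to $ \infty $, each orbit passes through the interpolation annulus at most once. Consequently $ \|\mu\|_\infty $ is controlled by the dilatation of $ \wt f $ on that annulus and is strictly less than $ 1 $, and by construction $ \wt f^\ast \mu = \mu $. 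The Measurable Riemann Mapping Theorem then produces a quasiconformal homeomorphism $ \phi : \C \to \C $, normalized to fix $ \infty $, whose Beltrami coefficient equals $ \mu $.

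Set $ g = \phi \circ \wt f \circ \phi^{-1} $. Because $ \phi $ integrates $ \mu $ and $ \wt f $ preserves $ \mu $, the map $ g $ preserves the standard complex structure, so by Weyl's lemma $ g $ is holomorphic on $ \C $. Properness and topological degree $ d $ are preserved under conjugation by $ \phi $, so $ g $ is a proper holomorphic self-map of $ \C $ of degree $ d $, which forces $ g $ to be a polynomial of degree $ d $. On $ \overline{V} $ we have $ \wt f = f $, giving $ \phi \circ f = g \circ \phi $ there. For the final identity, note that $ z \in K(f; V, W) $ iff $ f^n(z) \in V $ for all $ n \geq 0 $, iff $ \wt f^n(z) $ stays bounded (since $ \wt f $ acts like $ z^d $ off a large disk, any orbit leaving $ V $ eventually escapes to $ \infty $), iff $ g^n(\phi(z)) $ stays bounded, i.e.\ $ \phi(z) \in K(g) $. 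The main obstacle I expect is the first step: producing $ \wt f $ with controlled dilatation on the interpolation annulus while simultaneously matching $ f $ on $ \partial V $ and the model map on a large circle. Once that quasiregular extension is in hand, the remainder is a standard application of the MRMT, Weyl's lemma, and the characterization of proper degree-$ d $ entire maps as polynomials.
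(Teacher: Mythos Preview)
The paper does not give its own proof of this statement: Theorem~\ref{straight} is simply quoted from Douady and Hubbard \cite[Theorem 1]{DH} and used as a black box in the proof of Theorem~\ref{branhubbgen}. Your sketch is essentially the classical Douady--Hubbard argument from that reference, and the overall strategy (quasiregular extension, invariant Beltrami coefficient with dilatation bounded because each orbit meets the non-conformal locus at most once, MRMT, then Weyl's lemma and the characterisation of proper entire maps) is correct.

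Two small imprecisions worth tightening if you write this out in full. First, the interpolation is usually carried out on the annulus $ W \setminus \overline{V} $ (often after uniformising $ W $ to a round disk), not on an annulus ``enclosing $ \partial W $''; you need $ \wt f $ to agree with $ f $ on $ \overline{V} $ and with the model map outside $ W $, so the gluing region is exactly $ W \setminus \overline{V} $. Second, your description of $ \mu $ is slightly tangled: one does not simply set $ \mu = 0 $ on $ V $ and define it elsewhere by pullback, because points of $ V \setminus K(f;V,W) $ eventually leave $ V $ and must carry the pulled-back structure. The clean formulation is to set $ \mu = 0 $ on the attracting basin of $ \infty $ for the model map and then pull back by iterates of $ \wt f $ everywhere; on $ K(f;V,W) $ this automatically gives $ \mu = 0 $ since all iterates are conformal there. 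With these adjustments your argument goes through and matches the source the paper cites.
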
 

We also need the following recent results from polynomial dynamics.  

\begin{theorem} \cite{QY}
\label{qy}
For a non-linear polynomial $ g $, a component of $ K(g) $ is a singleton if and only if its orbit includes no periodic component of $ K(g) $ containing a critical point. 
\end{theorem}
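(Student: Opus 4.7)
I would prove the biconditional separately.

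For the (easy) \emph{only if} direction, argue the contrapositive. Suppose the orbit $K=K_0,K_1,\ldots$ of $K$ contains a periodic component $K_n$ that contains a critical point $c$ of $g$. First, $K_n$ is not a singleton: if it were, $K_n=\{c\}$, and periodicity would force $c$ to be a super-attracting periodic point, so the Fatou component around $c$ (which lies in $K(g)$) would be contained in $K_n$ yet strictly larger than $\{c\}$, a contradiction. Now pick any $z\in K$ and choose a non-trivial continuum $\Gamma\subset K_n$ through $g^n(z)$, taken small enough to avoid the finitely many critical values of $g^n$. Let $\Gamma'$ be the connected component of $g^{-n}(\Gamma)$ containing $z$. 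Since $g^n\colon\Gamma'\to\Gamma$ is a surjective unbranched covering, $\Gamma'$ is a non-trivial continuum, and $\Gamma'\subset K(g)$ because $g^n(\Gamma')\subset K_n\subset K(g)$. Hence $\Gamma'\subset K$, so $K$ is not a singleton.

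For the (hard) \emph{if} direction, I would use the Yoccoz puzzle. Fix $R>0$ with $\{|z|\geq R\}$ in the basin of infinity, and fix finitely many repelling periodic points of $g$ whose external rays land; together with the equipotential $\{|z|=R\}$ these rays bound finitely many depth-$0$ puzzle pieces, and the connected components of their preimages under $g^{-n}$ give the puzzle pieces of depth $n$. For each $z\in K(g)$ not on any ray this yields a nested sequence $P_0(z)\supset P_1(z)\supset\cdots$ of topological discs whose intersection of closures equals the component of $K(g)$ containing $z$.

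The strategy is to show that, under the hypothesis on the orbit of $K$,
\[ \sum_{n\geq 0}\operatorname{mod}\bigl(P_n(z)\setminus\overline{P_{n+1}(z)}\bigr)=\infty, \]
which by the Gr\"otzsch inequality forces $\operatorname{diam} P_n(z)\to 0$ and hence $K=\{z\}$. The combinatorial device here is the \emph{tableau} of $K$: the doubly-indexed record, at each depth $n$ and iterate $k$, of which critical point (if any) lies in $P_n(g^k(z))$. Using the tableau pull-back rules, together with the classical fact that moduli are preserved under univalent pull-back and lose at most a factor equal to the local degree under critical pull-back, one reduces divergence of the series to a combinatorial statement about how often ``critical annuli'' can be destroyed. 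The hypothesis that no periodic component in the orbit of $K$ contains a critical point is exactly what prevents such destruction from happening infinitely often without residual modulus surviving.

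The main obstacle is the recurrent, non-captured case: the critical orbit returns to the nested pieces $P_n(z)$ infinitely often but is never trapped in a periodic component on the orbit of $K$. Handling this is the heart of the Qiu--Yin (and, independently, the Kozlovski--van Strien) refinement of the original Branner--Hubbard theorem. One can either invoke complex a priori bounds on enhanced puzzle nests to extract definite moduli along a subsequence of depths, or carry out the full tableau dichotomy to rule out pathological tableaux under the hypothesis. Either route yields divergence of the modulus series, and hence $K=\{z\}$.
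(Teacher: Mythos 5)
The paper does not prove this statement at all: Theorem \ref{qy} is imported verbatim from Qiu--Yin \cite{QY} (the resolution of the Branner--Hubbard conjecture) and is used as a black box in the proof of Theorem \ref{branhubbgen}. So there is no internal argument to compare yours against; the relevant comparison is with the cited literature.

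Your ``only if'' direction is essentially complete and correct. Two small remarks: you do not actually need $\Gamma$ to avoid the critical values of $g^n$ --- a component of $g^{-n}(\Gamma)$ containing $z$ maps \emph{onto} $\Gamma$ because $g^n$ is proper, and a continuum surjecting onto a non-degenerate continuum is itself non-degenerate, branched or not. Also note that complete invariance of $K(g)$ gives $\Gamma'\subset K(g)$ directly, so $\Gamma'\subset K$ as you say. The ``if'' direction, however, has a genuine gap: everything after ``the strategy is to show that the moduli sum diverges'' is a correct roadmap of the Qiu--Yin (and Kozlovski--van Strien) proof, but the decisive step --- ruling out the recurrent, non-renormalizable tableau in which critical annuli are destroyed at almost every depth --- is exactly the content of the Branner--Hubbard conjecture, and you invoke it (``complex a priori bounds on enhanced puzzle nests'' or ``the full tableau dichotomy'') rather than prove it. Branner and Hubbard's original tableau argument settles only the case where the critical tableau is non-recurrent or periodic; the recurrent aperiodic case resisted proof for two decades and requires the Kozlovski--Shen--van Strien enhanced nest machinery (or an equivalent). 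There are also unaddressed technical points in your puzzle setup: one must justify the existence of suitable repelling periodic points with landing rays cutting $K(g)$ into pieces, and the identification of $\bigcap_n \overline{P_n(z)}$ with the component of $K(g)$ through $z$ (the ``end'' of the nest versus the component) needs an argument. As a description of where the proof lives and how it is structured, your sketch is accurate; as a proof, the hard half is missing.
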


\begin{theorem} \cite{RY1, RY2}
\label{ry}
If $ g $ is a non-linear polynomial, then any bounded component of $ F(g) $ which is not a Siegel disc is a Jordan domain. 
\end{theorem}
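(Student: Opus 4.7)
The plan is to reduce to the periodic, immediate-basin case and then invoke (and outline) the deep puzzle/rigidity machinery of Roesch–Yin. Let $U$ be a bounded, non-Siegel Fatou component of the non-linear polynomial $g$. By Sullivan's no-wandering-domains theorem, $U$ is pre-periodic: some forward image $g^m(U)$ lies in a periodic cycle of Fatou components. By the Fatou classification, a periodic Fatou component of a polynomial is either (i) a component of an attracting basin, (ii) a component of a parabolic basin, (iii) a Siegel disc, or (iv) a Herman ring; Herman rings do not occur for polynomials, and Siegel discs are excluded by hypothesis. So the periodic component $V$ reached by the orbit of $U$ is a bounded immediate attracting or parabolic basin component. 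If I can show $V$ is a Jordan domain, then since $g$ is a proper branched covering and every component of $g^{-1}(\Omega)$ of a Jordan domain $\Omega\subset\C$ is again a Jordan domain (Riemann–Hurwitz together with the fact that the preimage is \sconn\ and its boundary is a single Jordan curve, even at critical points on $\partial\Omega$), pulling back along the orbit gives that $U$ is a Jordan domain.

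Step two is to show local connectivity of $\partial V$. In the attracting case, use a Kœnigs or Böttcher coordinate near the attracting cycle to build a conformal model $\varphi$ of the first-return map $g^p|_V$ on a neighbourhood of the attracting point, and extend by iterated pullback under $g^p$ to obtain a Riemann map of $\D$ onto $V$ conjugating the first-return dynamics to $z\mapsto z^d$ or $z\mapsto\lambda z$. In the parabolic case, replace Böttcher/Kœnigs by Fatou petals and Leau coordinates. To extend $\varphi$ continuously to $\partial V$, build a Yoccoz-type puzzle for $g$ using external rays landing at repelling periodic points together with equipotentials, and run the standard tableau argument to show that the nest of puzzle pieces around any $\zeta\in\partial V$ shrinks to $\zeta$. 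This step is modelled on Hubbard's proof for the Mandelbrot set and Milnor's treatment in \cite{Mil}, and yields a continuous surjection $\overline{\D}\to\overline{V}$.

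Step three, and the main obstacle, is upgrading local connectivity to injectivity on $\partial V$: one must rule out pinch points, i.e.\ distinct prime ends that land at a common boundary point. This is where the real content of \cite{RY1,RY2} lies. Two internal rays landing together at $\zeta\in\partial V$ would, by transporting through the dynamics, force an infinite chain of puzzle pieces whose moduli do not grow; the analysis of this chain (via the Kahn–Lyubich covering lemma or the Kozlovski–Shen–van Strien rigidity technique) shows one of two contradictions: either a polynomial-like renormalisation of $g^{kp}$ containing $\zeta$ is produced whose small filled Julia set is non-trivial but is forced to be a single point, or else a critical point of $g$ must accumulate on $\zeta$ in a way incompatible with $V$ being an attracting/parabolic component. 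I expect this rigidity step to be by far the hardest part of the argument; the reduction in step one and the local-connectivity argument in step two are by now classical, while the combinatorial rigidity required to exclude pinching demands the full strength of the puzzle technology.
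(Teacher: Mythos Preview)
The paper does not prove this theorem at all: it is quoted verbatim as a result of Roesch and Yin \cite{RY1,RY2} and used as a black box in the proof of Theorem~\ref{branhubbgen}(b). There is therefore no ``paper's own proof'' to compare against; what you have written is an attempt to sketch the content of \cite{RY1,RY2} themselves.

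As a sketch of that content, your outline is broadly faithful to the architecture of the Roesch--Yin argument (reduction to the periodic attracting/parabolic case, puzzle-based local connectivity, then rigidity to exclude pinching), and you correctly identify step three as the heart of the matter. But it remains a roadmap rather than a proof: the rigidity step is only named, not carried out, and the pullback claim in step one (``every component of $g^{-1}(\Omega)$ of a Jordan domain $\Omega$ is again a Jordan domain'') needs more care than you give it when critical values lie on $\partial\Omega$. None of this is a defect for the purposes of the present paper, which simply cites the result; but if your intention was to supply an independent proof, what you have is an accurate table of contents for one, not the proof itself.
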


Finally, we make use of the following topological result.

\begin{lemma}
\label{union}
A countable union of compact, totally disconnected subsets of $ \C $ is totally disconnected.
\end{lemma}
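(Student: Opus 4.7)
The plan is to deduce the result from the classical countable Sum Theorem for zero-dimensional subsets of a separable metric space. The set $ A := \bigcup_{n \in \N} A_n $, equipped with the subspace topology from $ \C $, is a separable metric space, which is the setting in which the Sum Theorem applies.

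First I would observe that each $ A_n $, being compact, Hausdorff and totally disconnected, is zero-dimensional in the sense of small inductive dimension: every point of $ A_n $ has arbitrarily small clopen neighbourhoods in $ A_n $. This uses the standard fact that for a compact Hausdorff space total disconnectedness is equivalent to zero-dimensionality. Second, each $ A_n $ is compact in $ \C $, hence closed in $ \C $, and hence also closed in the subspace $ A $.

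Next I would invoke the classical countable Sum Theorem from dimension theory: if a separable metric space is expressed as a countable union of closed, zero-dimensional subsets, then the space itself has small inductive dimension zero. Applying this to $ A = \bigcup_{n \in \N} A_n $ yields $ \operatorname{ind}(A) = 0 $, and zero-dimensionality trivially implies total disconnectedness (any two distinct points can be separated by a clopen set of small diameter, whose complement in $ A $ is also clopen).

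The main obstacle is the Sum Theorem itself, which is non-trivial but entirely standard and would simply be quoted from a reference such as Hurewicz--Wallman or Engelking. If a self-contained planar proof were preferred, one could instead argue directly by constructing, for any two distinct points $ x, y \in A $, a Jordan curve in $ \C \setminus A $ enclosing $ x $ but not $ y $. One would do this inductively, using the zero-dimensionality of each $ A_n $ to perturb approximating curves $ \gamma_n $ off $ A_{n+1} $ at finitely many short arcs; the delicate point is controlling the perturbation sizes so that the $ \gamma_n $ converge uniformly to a simple closed curve, which is why the Sum Theorem route is the cleaner option.
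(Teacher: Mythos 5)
Your proof is correct and follows essentially the same route as the paper: both convert total disconnectedness of each compact piece into zero-dimensionality, apply the countable Sum Theorem for closed zero-dimensional subsets of a separable metric space (citing Hurewicz--Wallman), and conclude total disconnectedness of the union from zero-dimensionality.
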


\begin{proof} 
This is an immediate consequence of the following results, which may be found in Hurewicz and Wallman \cite[Chapter II]{HW}: 
\begin{itemize}
\item a compact, separable metric space is totally disconnected if and only if it is $ 0 $-dimensional;
\item a separable metric space which is the countable union of $ 0 $-dimensional closed subsets of itself is $ 0 $-dimensional; 
\item every $ 0 $-dimensional, separable metric space is totally disconnected.
\end{itemize}
Here, a non-empty space is \textit{0-dimensional} if each of its points has arbitrarily small \nhd s with empty boundaries.
\end{proof}

We are now in a position to give the proof of Theorem \ref{branhubbgen} and Corollary \ref{totdisc}.

\begin{proof}[Proof of Theorem \ref{branhubbgen}]
Since $ f $ is \spl, it follows from Definition \ref{spl} that there exist sequences $ (V_n), (W_n) $ of bounded, \sconn\ domains with smooth boundaries such that $ V_n \subset V_{n+1} $ and $ W_n \subset W_{n+1} $ for $ n \in \N $, $ \bigcup_{n \in \N} V_n = \bigcup_{n \in \N} W_n = \C $ and each of the triples $ (f; V_n, W_n) $ is a polynomial-like mapping.

Let $ K(f; V_n, W_n) $ denote the filled Julia set of the polynomial-like mapping $ (f; V_n, W_n) $. Then clearly we have
\[ K(f; V_n, W_n) \subset K(f; V_{n+1}, W_{n+1}), \quad \textrm{ for } n \in \N, \]
and
\begin{equation}
\label{ksum}
K(f) = \bigcup_{n \in \N} K(f; V_n, W_n).  
\end{equation} 

Now let $ K $ be a component of $ K(f) $ whose orbit includes no periodic component of $ K(f) $ containing a critical point. We show that $ K $ must be a singleton.   

For each $ n \in \N $, define
\[ K_n = K \cap K(f; V_n, W_n).  \]
Then $ K = \bigcup_{n \in \N} K_n $, and since any component of $ K(f; V_n, W_n) $ must lie in a single component of $ K(f) $ it follows that, where $ K_n \neq \emptyset, $ each component of $ K_n $ must be a component of $ K(f; V_n, W_n) $. In particular, each component of $ K_n $ must be compact.

Moreover, no component of $ K_n $ can have an orbit which includes a periodic component of $ K(f; V_n, W_n) $ containing a critical point.  For any such periodic component of $ K(f; V_n, W_n) $ would lie in a periodic component of $ K(f), $ and since $ K_n \subset K $, the orbit of $ K $ would then include a periodic component of $ K(f) $ containing a critical point, contrary to our assumption.

Now it follows from Theorem \ref{straight} that, for each $ n \in \N, $ there exists a quasiconformal mapping $ \phi_n : \C \to \C $ and a polynomial $ g_n $ of the same degree as $ (f; V_n, W_n) $ such that $ \phi_n \circ f = g_n \circ \phi_n $ on $ \overline{V}_n, $  and
\begin{equation}
\label{qch}
\phi_n(K(f; V_n, W_n)) = K(g_n), 
\end{equation}
where $ K(g_n) $ is the filled Julia set of the polynomial $ g_n $.  

Thus it follows from (\ref{qch}) and Theorem \ref{qy}, and the fact that critical points are preserved by the quasiconformal mapping, that every component of $ K_n $ is a singleton, i.e. $ K_n $ is totally disconnected, for each $ n \in \N $.  Lemma \ref{union} now gives that $ K $ is totally disconnected, and since $ K $ is connected it must be a singleton.

For the converse, suppose now that a component $ K $ of $ K(f) $ is a singleton.  Then it follows from (\ref{ksum}) that there exists $ N \in \N $ such that $ K $ is a singleton component of $ K(f; V_n, W_n) $ for all $ n \geq N $.  Thus, by (\ref{qch}) and Theorem \ref{qy}, for each $ n \geq N $ the orbit of $ K $ can include no periodic component of $ K(f; V_n, W_n) $ containing a critical point.  The desired converse now follows from (\ref{ksum}).    

Finally, since by definition the orbit of a wandering component of $ K(f) $ contains no periodic component, it follows that every wandering component of $ K(f) $ is a singleton.  This completes the proof of part $ (a) $. 

To prove part $ (b) $ note first that, for any \tef\ $ f $, since $ J(f) = \partial K(f) $ it is immediate that for any component $ K $ of $ K(f) $ we have $ \partial K \subset J(f) $ and $ \textrm{int}(K) \subset F(f). $

Now let $ f $ be \spl, and let $ K $ be a component of $ K(f) $ with non-empty interior.  As in the proof of part $ (a) $, we write $ K = \bigcup_{n \in \N} K_n $ where
\[ K_n = K \cap K(f; V_n, W_n),  \]
so $ K_n $ has non-empty interior for sufficiently large $ n $.  Then, since every component of $ K_n $ is a component of $ K(f; V_n, W_n) $, it follows from (\ref{qch}) that the interior of a component of $ K_n $ is quasiconformally homeomorphic to the interior of a component of the filled Julia set $ K(g_n) $ of the polynomial $ g_n, $ which consists of bounded Fatou components that are non-wandering by Sullivan's theorem~\cite{SU}.  Evidently, therefore, if a Fatou component $ U $ of $ f $ meets $ K_n $, we have $ \overline{U} \subset K_n $, and it follows that all Fatou components in $ K(f) $ are bounded and non-wandering. Since Siegel discs and Jordan curves are preserved by the quasiconformal mapping, the remainder of part (b) now follows from Theorem~\ref{ry}. 
\end{proof}

\begin{proof}[Proof of Corollary \ref{totdisc}]
Since $ f $ is \spl, it follows from Theorem \ref{branhubbgen}(a) that a component $ K $ of $ K(f) $ is a singleton unless the orbit of $ K $ includes a periodic component of $ K(f) $ containing a critical point.  Part (a) now follows because $ f $ can have at most countably many critical points.

If $ K(f) $ is totally disconnected then all of its components are singletons, so part~(b) follows immediately from Theorem \ref{branhubbgen}(a).  
\end{proof}

\begin{remark*}
Zheng \cite[Theorem 2]{Z0}, \cite[Theorem 4]{Z1} has shown that, if $ f $  is a \tef\ for which there exists an unbounded sequence $ (r_n) $ of positive real numbers such that
\begin{equation*}
m(r_n,f) > r_n, \quad \textrm{for } n \in \N, 
\end{equation*}
and if $ U $ is a component of $ F(f) $, then
\begin{enumerate}[(i)]
\item if $ U $ contains a point $ z_0 $ such that $ \lbrace f^n(z_0) : n \in \N \rbrace $ is bounded, then $ U $ is  bounded, and
\item if $ U $ is wandering, then there exists a subsequence of $ f^n $ on $ U $ tending to~$ \infty. $
\end{enumerate}  
It follows that, for such functions, the interior of $ K(f) $ consists of bounded, non-wandering Fatou components.  As these functions are \spl\ by Theorem \ref{types}, the first part of Theorem \ref{branhubbgen}(b) is a generalisation of Zheng's results. 
\end{remark*}

\section{Proofs of theorems \ref{conn} and \ref{uncount}} 
\label{thms2}
\setcounter{equation}{0}

In this section we prove Theorems \ref{conn} and \ref{uncount}, which concern the number of components of $ K(f) $ and $ K(f) \cap J(f) $ when $ f $ is a \tef.
 
Theorem \ref{conn} is a consequence of the following result due to Rippon and Stallard.  Here $ E(f) $ is the \textit{exceptional set} of $ f $, i.e. the set of points with a finite backwards orbit under $ f $ (which for a \tef\ contains at most one point).  

\begin{theorem}\cite[Theorem 5.2]{RS11} 
\label{rs}
Let $ f $ be a \tef.  Suppose that the set $ S $ is completely invariant under $ f $, and that $ J(f) = \overline{S \cap J(f)}. $  Then exactly one of the following holds:
\begin{enumerate}[(1)]
\item $ S $ is connected;
\item $ S $ has exactly two components, one of which is a singleton $ \lbrace \alpha \rbrace $, where $ \alpha $ is a fixed point of $ f $ and $ \alpha \in E(f) \cap F(f); $
\item $ S $ has infinitely many components.
\end{enumerate}
\end{theorem}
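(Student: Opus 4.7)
The plan is to suppose $ S $ is not connected and has only finitely many components $ S_1, \dots, S_k $ with $ 2 \le k < \infty $, and to show this forces case~(2); infinitely many components then accounts for case~(3). The first move is to pass to an iterate so that each component is individually completely invariant. Since $ f $ is continuous and each $ S_i $ is connected, $ f(S_i) $ lies in a single component $ S_{\sigma(i)} $; Picard's theorem together with complete invariance makes $ \sigma $ a surjection on the finite index set (modulo the special case of a singleton component consisting of a Picard-omitted value, which one handles separately), hence a permutation. Replacing $ f $ by $ f^N $ for $ N $ the order of $ \sigma $, each $ S_i $ is then completely invariant under $ f^N $, and the density hypothesis transfers because $ J(f^N) = J(f) $, $ F(f^N) = F(f) $, and $ |E(f^N)| \le 1 $.

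Next, I would argue that at most one component can meet $ J(f) $. If $ S_1 \cap J(f) \ne \emptyset $, choose $ z_0 \in S_1 \cap J(f) \setminus E(f^N) $, possible because $ J(f) $ is uncountable. Complete invariance of $ S_1 $ under $ f^N $ puts all of $ \bigcup_{m \ge 0} f^{-mN}(z_0) $ inside $ S_1 $, and the standard Montel-type density of preimages at a non-exceptional Julia point gives $ \overline{S_1} \supseteq J(f) $. If another component $ S_2 $ contained a Julia point $ p $, then $ p $ would be the limit of a sequence $ (p_n) \subset S_1 $ in $ \C $; since $ p \in S $ and $ S_1 $ is closed in the subspace topology on $ S $ (components of any space are closed in it), the limit $ p $ would land in $ S_1 $, contradicting $ S_1 \cap S_2 = \emptyset $.

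The remaining components lie in $ F(f) $, and I would show each is a singleton $ \{\alpha\} $ with $ \alpha \in E(f) \cap F(f) $. Picking $ w $ in a component $ S_j \subseteq F(f) $ and running the same preimage-density argument, unless $ w \in E(f^N) $ one would get a set dense in $ J(f) $ inside $ S_j $---impossible since $ S_j \subseteq F(f) $. Thus $ S_j \subseteq E(f^N) $, a one-point set, so $ S_j = \{\alpha\} $. Complete $ f^N $-invariance gives $ f^{-N}(\alpha) = \{\alpha\} $, and unwinding the chain $ f^{-1}(\alpha) \subseteq \dots \subseteq f^{-N}(\alpha) = \{\alpha\} $ shows that $ \alpha $ is a totally $ f $-invariant fixed point, so $ \alpha \in E(f) \cap F(f) $. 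Since $ |E(f)| \le 1 $, there is at most one such singleton. Combining with the previous paragraph, $ k \le 2 $, and the density hypothesis (together with $ J(f) \ne \emptyset $) forces at least one component to meet $ J(f) $, so either $ k = 1 $ (case~(1)) or $ k = 2 $ with the distinguished singleton of case~(2).

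The main obstacle is the second paragraph's contradiction, which requires converting the density of iterated preimages of a non-exceptional Julia point into a structural statement about the components of $ S $; the crucial ingredient is the topological fact that components are closed in the subspace topology on $ S $, not merely in $ \C $. A secondary bookkeeping difficulty is handling the possibly non-bijective case for $ \sigma $, which may arise when some component is a singleton consisting of an omitted Picard value, and reconciling $ E(f) $ with $ E(f^N) $ when recovering the total $ f $-invariance of the distinguished singleton from its total $ f^N $-invariance.
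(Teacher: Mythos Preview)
The paper does not contain a proof of this theorem: it is quoted from Rippon and Stallard \cite[Theorem~5.2]{RS11} and used as a black box to deduce Theorem~\ref{conn}. There is therefore no argument in the present paper against which to compare your proposal.

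That said, your sketch is along sensible lines, and the two central ideas --- passing to an iterate so that the finitely many components become individually invariant, and using that components are closed in the subspace topology on $S$ to convert density of backward orbits in $J(f)$ into an exclusion principle --- are sound and indeed close to how results of this type are usually proved. Two points would need tightening in a full write-up. First, the ``unwinding'' chain $f^{-1}(\alpha)\subseteq\cdots\subseteq f^{-N}(\alpha)$ is not valid in general; the clean argument is that $(f^N)^{-1}(\alpha)=\{\alpha\}$ gives $\alpha\in E(f^N)=E(f)$, so $f^{-1}(\alpha)\subseteq\{\alpha\}$, and then $f^N(\alpha)=\alpha$ rules out $f^{-1}(\alpha)=\emptyset$ (otherwise $f^{N-1}(\alpha)\in f^{-1}(\alpha)=\emptyset$), forcing $f(\alpha)=\alpha$. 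Second, the case you flag --- a singleton component $\{w\}$ with $w$ a Picard omitted value --- is more than bookkeeping: such a $w$ is \emph{not} a fixed point, so case~(2) as stated does not cover it, and your permutation argument breaks down precisely here. You would need an additional argument to exclude the possibility of exactly two components in that situation, and your sketch does not supply one.
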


\begin{proof} [Proof of Theorem \ref{conn}]
Since $ K(f) $ is completely invariant and dense in $ J(f) $, it is evident that the conditions of Theorem \ref{rs} hold with $ S = K(f) $.  Case (2) cannot occur since if $ z \in F(f) $ has bounded orbit, then so does a \nhd\ of $ z $ in $ F(f). $ 
\end{proof}

Theorem \ref{uncount} gives a new result on components of $ K(f) \cap J(f) $ for a general \tef, and also shows that we can improve on Theorem~\ref{conn} for \spl\ functions.  Our proof of this result uses the well-known \textit{blowing up property} of $ J(f) $:

\begin{myindentpar}{1cm}
if $ f $ is an entire function, $ K $ is a compact set, $ K \subset \C \setminus E(f) $ and $ G $ is an open \nhd\ of $ z \in J(f) $, then there exists $ N \in \N $ such that $ f^n(G) \supset K $, for all $ n \geq N. $
\end{myindentpar}

We also need the following lemmas.

\begin{lemma} \cite[Lemma 3.1]{R11}
\label{rempe}
Let $ C \subset \C $.  Then $ C $ is disconnected if and only if there is a closed connected set $ A \subset \C $ such that $ C \cap A = \emptyset $ and at least two different connected components of $ \C \setminus A $ intersect $ C. $
\end{lemma}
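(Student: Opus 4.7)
The plan is to handle the easy ``if'' direction directly from the definition of disconnectedness, and the harder ``only if'' direction in two stages: first construct, from a separation of $C$, a closed subset of $\C\setminus C$ that separates chosen points of $C_1$ and $C_2$, and then extract a closed, connected component that still separates.

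For $(\Leftarrow)$, assume such an $A$ exists. Since $\C$ is locally connected and $A$ is closed, the components of the open set $\C\setminus A$ are themselves open and pairwise disjoint, and their union contains $C$. Let $U_1$ be one component meeting $C$ and let $U_2:=(\C\setminus A)\setminus U_1$ be the union of the remaining components. By hypothesis $U_2\cap C\neq\emptyset$, so $U_1$ and $U_2$ are disjoint open sets in $\C$ covering $C$ and both meeting it, which gives the required clopen splitting of $C$.

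For $(\Rightarrow)$, fix a clopen decomposition $C=C_1\sqcup C_2$; then $\overline{C_1}\cap C_2=\overline{C_2}\cap C_1=\emptyset$. Although $D:=\overline{C_1}\cap\overline{C_2}$ may be non-empty, it is disjoint from $C$. The open, hence metrizable and normal, subspace $\C\setminus D$ contains $\overline{C_1}\setminus D$ and $\overline{C_2}\setminus D$ as disjoint closed subsets, so normality delivers disjoint open sets $V_1,V_2\subset\C$ with $C_1\subset V_1$ and $C_2\subset V_2$. Setting $K:=\C\setminus(V_1\cup V_2)$ gives a closed subset of $\C$ disjoint from $C$, and for any $a\in C_1$ and $b\in C_2$ the points $a$ and $b$ lie in distinct components of $\C\setminus K=V_1\cup V_2$.

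The main step, and the one I expect to be the chief technical obstacle, is to replace $K$ by a single closed connected subset that still separates $a$ from $b$. For this I would invoke the classical Phragmen--Brouwer property of the sphere $\widehat\C$: namely, that in a unicoherent Peano continuum, whenever a closed set separates two points, some connected component of it already separates them. Passing to $\widehat\C$ and arranging, whenever possible, that $\infty$ lies in $V_1\cup V_2$ (for instance by enlarging whichever $V_i$ corresponds to a bounded $C_i$ to contain a neighbourhood of $\infty$), one selects such a component $A\subset K\subset\C\setminus C$; this $A$ is closed and connected, and the components of $\C\setminus A$ containing $a$ and $b$ are distinct and both meet $C$. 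The edge case in which both $C_1$ and $C_2$ are unbounded requires a small extra check that the selected component remains connected after removing the point at infinity, which can be arranged by a careful choice of $V_1,V_2$ on either side of a sufficiently large circle.
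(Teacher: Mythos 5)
Your ``if'' direction is fine, and the first stage of the ``only if'' direction (normality of $\C\setminus(\overline{C_1}\cap\overline{C_2})$ giving disjoint open sets $V_1\supset C_1$, $V_2\supset C_2$, hence a closed set $K=\C\setminus(V_1\cup V_2)$ disjoint from $C$ separating $a\in C_1$ from $b\in C_2$) is correct. The appeal to the second separation theorem for $\widehat\C$ (a closed subset of the sphere separating two points contains a component separating them) is also a legitimate classical result. The gap is in how you get back from $\widehat\C$ to $\C$. First, a small slip: to put $\infty$ into $V_1\cup V_2$ you must attach the neighbourhood of $\infty$ to the $V_i$ of the \emph{unbounded} piece (and truncate the other), not to the $V_i$ of the bounded piece; as written, your enlarged $V_i$ would meet the other one. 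More seriously, when both $C_1$ and $C_2$ are unbounded this manoeuvre is impossible for \emph{any} choice of $V_1,V_2$, since both must contain points of arbitrarily large modulus; in particular your proposed fix of choosing $V_1,V_2$ ``on either side of a sufficiently large circle'' cannot be carried out. This case genuinely occurs and cannot be avoided by re-choosing the separation (e.g.\ $C=\{\operatorname{Im}z=1\}\cup\{\operatorname{Im}z=-1\}$, where $C\cup\{\infty\}$ is connected in $\widehat\C$, so every separation of $C$ has both parts unbounded). In this case the component $Q$ of $K\cup\{\infty\}$ produced by the sphere theorem may contain $\infty$, and $Q\setminus\{\infty\}$ may be disconnected (every component of $Q\setminus\{\infty\}$ accumulates at $\infty$, by boundary bumping), and it is not clear that any single component of $Q\setminus\{\infty\}$ still separates -- that would be precisely the second separation theorem for non-compact closed subsets of $\R^2$, which is not what the sphere theorem gives you. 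So the ``edge case'' is not a small extra check; it is the whole difficulty, and your argument does not close it.

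The standard way to avoid this (and, in substance, the route taken in the cited source, the paper itself only quoting Rempe's lemma without proof) is to produce a connected separator directly in $\C$ rather than extracting a component of $K$ on the sphere. Let $G$ be the component of $V_1$ containing $a$ and let $T$ be the component of $\C\setminus G$ containing $b$. Then $A:=\partial T$ is closed, satisfies $A\subset\partial G\subset\partial V_1\subset\C\setminus(V_1\cup V_2)\subset\C\setminus C$, and separates $a$ from $b$, since $\C\setminus A$ is the disjoint union of the open sets $\operatorname{int}T\ni b$ and $\C\setminus T\supset G\ni a$. Its connectedness is exactly unicoherence of $\R^2$: one checks via boundary bumping that $\C\setminus T$ is connected, and then $\C=\overline{\C\setminus T}\cup T$ is a union of two closed connected sets, so their intersection $\partial T$ is connected. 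This argument needs no compactification and treats bounded and unbounded pieces uniformly.
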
 

\begin{lemma} \cite[Lemma 1]{RS09}
\label{cover}
Let $ E_n, n \geq 0, $ be a sequence of compact sets in $ \C $, and $ f : \C \to \widehat{\C} $ be a continuous function such that
\[ f(E_n) \supset E_{n+1}, \text{   for } n \geq 0. \]
Then there exists $ \zeta $ such that $ f^n(\zeta) \in E_n $, for $ n \geq 0 $.

If $ f $ is also meromorphic and $ E_n \cap J(f) \neq \emptyset $ for $ n \geq 0 $, then there exists $ \zeta \in J(f) $ such that $ f^n(\zeta) \in E_n, $ for $ n \geq 0. $ 
\end{lemma}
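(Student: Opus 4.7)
The plan is a routine nested-compact-sets argument built on the finite intersection property. For the first statement I would define, for each $n \ge 0$,
\[ F_n := \{ z \in E_0 : f^k(z) \in E_k \text{ for all } 0 \le k \le n \} = E_0 \cap f^{-1}(E_1) \cap \cdots \cap f^{-n}(E_n). \]
Because the sets $E_k$ are closed in $\C$ and $f$ (hence each iterate $f^k$, wherever it is defined) is continuous, each $F_n$ is a closed subset of the compact set $E_0$, and therefore compact. The sequence $(F_n)$ is manifestly decreasing, so the first assertion will follow once every $F_n$ is shown to be non-empty.

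To check non-emptiness I would carry out a short backward induction using the covering hypothesis. Starting from any $x_n \in E_n$, the condition $f(E_{k-1}) \supset E_k$ lets me successively pick $x_{k-1} \in E_{k-1}$ with $f(x_{k-1}) = x_k$ for $k = n, n-1, \ldots, 1$. The point $x_0$ then satisfies $f^k(x_0) = x_k \in E_k$ for each $0 \le k \le n$, so $x_0 \in F_n$. By the finite intersection property, $\bigcap_{n \ge 0} F_n \neq \emptyset$, and any $\zeta$ in this intersection satisfies $f^n(\zeta) \in E_n$ for every $n \ge 0$, as required.

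For the refinement when $f$ is meromorphic, the plan is simply to replay the argument with $E_n$ replaced by $E_n' := E_n \cap J(f)$. These sets are compact and non-empty by hypothesis, and the key point is to verify the covering condition $f(E_n') \supset E_{n+1}'$. This follows directly from the complete invariance of $J(f)$ under $f$: given $y \in E_{n+1} \cap J(f)$, the original hypothesis produces some $x \in E_n$ with $f(x) = y$, and since $f(x) \in J(f)$, complete invariance forces $x \in J(f)$, so $x \in E_n'$. Applying the first part to the sequence $(E_n')$ then yields a $\zeta \in E_0 \cap J(f) \subset J(f)$ with $f^n(\zeta) \in E_n \cap J(f) \subset E_n$ for all $n \ge 0$.

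There is essentially no obstacle here; compactness and the finite intersection property do all the work, and the meromorphic case is extracted by restricting to $J(f)$ via its complete invariance. The only mild subtlety worth a line is that the map $f \colon \C \to \widehat{\C}$ may have poles, but on each $F_n$ none of the iterates $f^k$, $0 \le k \le n$, takes the value $\infty$ (since $f^k(z) \in E_k \subset \C$), so the preimages $f^{-k}(E_k)$ are unambiguously defined closed subsets of $\C$ and the argument proceeds unchanged.
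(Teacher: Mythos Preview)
The paper does not supply its own proof of this lemma; it is quoted verbatim from \cite[Lemma~1]{RS09} and used as a black box in the proof of Theorem~\ref{uncount}. Your argument is the standard nested-compact-sets proof and is correct, including your handling of the pole issue and your reduction of the meromorphic refinement to the first part via complete invariance of $J(f)$.
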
 

\begin{proof}[Proof of Theorem \ref{uncount}]
We first prove part (a).  If $ K(f) \cap J(f) $ is disconnected, then it follows from Lemma \ref{rempe} that there exists a continuum $ \Gamma \subset (K(f) \cap J(f))^c $ with two complementary components, $ G_1 $ and $ G_2 $ say, each of which contains points in $ K(f) \cap J(f) $.  

Suppose, then, that $ z_i \in G_i \cap K(f) \cap J(f) $ for $ i = 1,2 $, and let $ H_i $ be a bounded open \nhd\ of $ z_i $ compactly contained in $ G_i $.  Since $ J(f) $ is perfect we may without loss of generality assume that neither $ \overline{H}_1 $ nor $ \overline{H}_2 $ meets $ E(f) $.  

Now let $ z $ be an arbitrary point in $ J(f) $, and let $ V $ be a bounded open \nhd\ of $ z $.  Then, by the blowing up property of $ J(f) $, there exists $ K \in \N $ such that
\begin{equation}
\label{blowup1}
f^k(V) \supset \overline{H}_1 \cup \overline{H}_2
\end{equation}
for all $ k \geq K. $  Furthermore, there exists $ M \geq K $ such that
\begin{equation}
\label{blowup2}
f^m(H_1) \supset \overline{H}_1 \cup \overline{H}_2 \quad \textrm{ and } \quad f^m(H_2) \supset \overline{H}_1 \cup \overline{H}_2,
\end{equation}
for all $ m \geq M. $

Now let $ s = s_1s_2s_3 \ldots $ be an infinite sequence of $ 1 $s and $ 2 $s.  We show that each such sequence $ s $ can be associated with the orbit of a point in $ \overline{V} \cap K(f) \cap J(f) $, as follows.

Put $ S_0 = \overline{V} $ and, for $ n \in \N $, put $ S_n = \overline{H}_i $ if $ s_n = i $.  It follows from (\ref{blowup1}), (\ref{blowup2}) and Lemma \ref{cover} that there exists a point $ \zeta_s \in J(f) $ such that $ f^{Mn}(\zeta_s) \in S_n $ for $ n \geq 0. $  In particular, $ \zeta_s \in \overline{V}. $ Furthermore, for all $ k \geq 0 $ we have
\[ f^k(\zeta_s) \in \bigcup_{j = 0}^{M - 1} f^j(\overline{V}) \cup f^j(\overline{H}_1 \cup \overline{H}_2), \]
so $ \zeta_s $ has bounded orbit and thus lies in $ K(f). $

Now the points in $ \overline{V} \cap K(f) \cap J(f) $ whose orbits are associated with two different infinite sequences of $ 1 $s and $ 2 $s must lie in different components of $ K(f) \cap J(f) $.  For if two such sequences first differ in the $ N $th term, then the $ MN $th iterate of one point will lie in $ G_1 $ and the other in $ G_2 $.  Thus, if the two points were in the same component $ K $ of $ K(f) \cap J(f) $, then $ f^{MN}(K) $ would meet $ \Gamma \subset (K(f)\cap J(f))^c, $ which is a contradiction.

Now there are uncountably many possible infinite sequences $ s = s_1s_2s_3 \ldots $ of $ 1 $s and $ 2 $s, so we have shown that every \nhd\ of an arbitrary point in $ J(f) $ meets uncountably many components of $ K(f) \cap J(f) $, as required.  

The proof of part (b) is similar, but we now make the additional assumption that $ f $ is \spl.  Since we are assuming that $ K(f) $ is disconnected, it follows from Lemma \ref{rempe} that there is a continuum in $ K(f)^c $ with two complementary components, each of which contains points in $ K(f) $.  As in the proof of part (a), we label the continuum $ \Gamma $ and the complementary components $ G_1 $ and~$ G_2 $.  

We show that, in fact, each of $ G_1 $ and $ G_2 $ must contain points in $ K(f) \cap J(f). $  For if not, $ G_i \subset F(f) $ for some $ i \in \lbrace 1, 2 \rbrace $.  However, since $ f $ is \spl, it follows from Theorem \ref{branhubbgen}(b) that the Fatou component $ U $ containing $ G_i $ must be bounded and non-wandering, so that $ \overline{U} \subset K(f) $.  Thus $ \overline{U} \subset G_i $, which is a contradiction.

So, as before, we may choose $ z_i \in G_i \cap K(f) \cap J(f) $ for $ i = 1,2 $, and bounded open \nhd s $ H_i $ of $ z_i $ compactly contained in $ G_i $.  The proof now proceeds exactly as for the proof of part (a), but we conclude that points in $ \overline{V} \cap K(f) $ whose orbits are associated with two different infinite sequences of $ 1 $s and $ 2 $s must lie in different components of $ K(f) $.  It then follows that every \nhd\ of an arbitrary point in $ J(f) $ meets uncountably many components of $ K(f) $.    
\end{proof}

\begin{remark*}
It follows from Theorem \ref{uncount}(b) and Corollary \ref{totdisc}(a) that, if $ f $ is \spl, then $ K(f) $ has uncountably many singleton components.
\end{remark*}

\section{\spl\ functions} 
\label{splfunc}
\setcounter{equation}{0}

In this section we prove Theorem \ref{altdef}, which gives a useful equivalent characterization of a \spl\ function, and Theorem \ref{types}, which gives several large classes of \tef s which are \spl. 

\begin{proof}[Proof of Theorem \ref{altdef}]

First, suppose that $ f $ is \spl\ and let $ (V_n), (W_n) $ be the sequences of bounded, \sconn\ domains in Definition ~\ref{spl}.  Since $ (f; V_n,W_n) $ is a polynomial-like mapping, it follows that $ \overline{V}_n \subset W_n $ and $ f(\partial V_n) = \partial W_n $, for $ n \in \N $.  Moreover, taking a subsequence of $ (V_n)_{n \in \N} $ if necessary, we can assume that $ W_n \subset V_{n+1} $ for $ n \in \N $.  Putting $ D_n = V_n $ for $ n \in \N $ then gives a sequence of domains with the properties stated in the theorem.

For the converse, let $ (D_n)_{n \in \N} $ be a sequence of bounded, \sconn\ domains with the properties stated in the theorem.  Since $ f(D_n) $ is bounded, we may assume without loss of generality that
\begin{equation}
\label{surround}
f(D_n) \subset D_{n+1}, \quad \textrm{ for } n \in \N.
\end{equation}
Now, for each $ n \in \N, $ let $ \Gamma_n $ be a smooth Jordan curve that surrounds $ \overline{D}_{n+1} $ and lies in the complementary component of $ f(\partial D_{n+1}) $ containing $ \overline{D}_{n+1} $.  Observe that it follows from the properties of the sequence $ (D_n)_{n \in \N} $ that $ f $ has no finite asymptotic values.  Furthermore, we may assume that each $ \Gamma_n $ does not meet any of the critical values of $ f $.    

Let $ W_n $ denote the bounded complementary component of $ \Gamma_n $. Then $ W_n $ contains $ D_{n+1} $ and hence $ f(D_n) $ by (\ref{surround}).  Thus there is a component $ V_n $ of $ f^{-1}(W_n) $ that contains $ D_n $.  Furthermore, $ f:V_n \to W_n $ is a proper mapping, and since $ f $ is transcendental we may assume that the degree of this mapping is at least $ 2. $ 

Now $ \overline{V}_n \subset W_n. $  For suppose not. Then since $ \partial D_{n+1} \subset W_n $ and $ D_n \subset V_n \cap D_{n+1} $ we must have $ V_n \cap \partial D_{n+1} \neq \emptyset $.  However, if $ \zeta \in V_n \cap \partial D_{n+1} $ then it follows that $ f(\zeta) \in W_n \cap f(\partial D_{n+1}) $, which contradicts the fact that $ W_n $ and $ f(\partial D_{n+1}) $ are disjoint.

Moreover, $ V_n $ is simply connected. For suppose that $ V_n $ is \mconn, and let $ \gamma $ be a Jordan curve in $ V_n $ which is not null homotopic there. Let $ G $ be the bounded complementary component of $ \gamma $, so that $ G $ contains a component of $ \partial V_n. $  Now since $ f $ is a proper mapping we have $ f(\partial V_n) = \Gamma_n = \partial W_n $, so $ f(G) \cap \Gamma_n \neq \emptyset $, which is impossible because $ f(\gamma) \subset W_n $ and $ f(G) $ is bounded. Thus $ V_n $ is indeed \sconn, and since $ \Gamma_n $ meets no critical values of $ f $, $ \partial V_n $ is a smooth Jordan curve.
 
This establishes that, for each $ n \in \N $, the triple $ (f; V_n, W_n) $ is a polynomial-like mapping.  Furthermore, it follows from the construction that the sequences $ (V_n) $ and $ (W_n) $ have the properties in Definition \ref{spl}.  This completes the proof.
\end{proof}

We now turn to Theorem \ref{types}, which gives a sufficient condition for a \tef\ to be \spl, and lists a number of classes of functions for which this condition holds.  The sufficient condition is proved in the following lemma.  

\begin{lemma}
\label{suffcon}
A \tef\ $ f $ is \textit{\spl} if there exists an unbounded sequence $ (r_n) $ of positive real numbers such that
\begin{equation*}
m(r_n,f) > r_n, \quad \textrm{for } n \in \N. 
\end{equation*}
\end{lemma}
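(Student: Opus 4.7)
The strategy is to apply Theorem \ref{altdef} with a suitable nested sequence of discs. By passing to a subsequence of $ (r_n) $, which is possible because $ (r_n) $ is unbounded, I may assume $ r_n \to \infty $ and $ r_n < r_{n+1} $ for all $ n. $ Then I set $ D_n = \lbrace z : \vert z \vert < r_n \rbrace. $ The nesting $ \overline{D}_n \subset D_{n+1} $ and the exhaustion $ \bigcup_{n \in \N} D_n = \C $ are immediate, so everything reduces to showing that $ f(\partial D_n) $ surrounds $ \overline{D}_n $ for all sufficiently large $ n. $ After discarding finitely many initial indices, Theorem \ref{altdef} will then yield that $ f $ is \spl.

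For the surrounding property I would use the argument principle. Setting $ m_n = m(r_n,f) > r_n $, for any $ a $ with $ \vert a \vert < m_n $ we have $ \vert f(z) \vert \geq m_n > \vert a \vert $ on $ \partial D_n $, so $ a \notin f(\partial D_n) $ and the winding number of $ f\vert_{\partial D_n} $ about $ a $ equals the number $ N(a) $ of solutions of $ f(z) = a $ in $ D_n $, counted with multiplicity. Since $ N(a) $ is integer-valued and locally constant on the connected open disc $ \lbrace \vert a \vert < m_n \rbrace $, it is in fact constant there.

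To show this constant value is positive, I would evaluate $ N $ at the convenient test point $ a = f(0). $ Because $ \vert f(0) \vert $ is fixed and $ m_n > r_n \to \infty $, for all large enough $ n $ we have both $ \vert f(0) \vert < m_n $ and $ 0 \in D_n $, so $ f(0) $ lies in $ \lbrace \vert a \vert < m_n \rbrace $ and is attained by $ z = 0 \in D_n $, giving $ N(f(0)) \geq 1. $ Hence $ N(a) \geq 1 $ for every $ a \in \overline{D}_n \subset \lbrace \vert a \vert < m_n \rbrace $, so every such $ a $ lies in a bounded complementary component of $ f(\partial D_n) $; connectedness of $ \overline{D}_n $ then forces it into a single bounded complementary component, so $ f(\partial D_n) $ surrounds $ \overline{D}_n. $ The delicate point in this plan is the positivity of $ N(a) $: verifying $ N(a) \geq 1 $ uniformly in $ a \in \overline{D}_n $ directly would seem to need awkward preimage bounds on $ f, $ whereas the constancy argument reduces everything to evaluation at the essentially free test point $ f(0). $
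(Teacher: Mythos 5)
Your proof is correct, and its overall skeleton (pass to a strictly increasing subsequence, take $D_n=\{z:|z|<r_n\}$, and invoke Theorem~\ref{altdef}) is exactly the paper's. The one substantive step --- that $f(\partial D_n)$ surrounds $\overline{D}_n$ for large $n$ --- is where you diverge. The paper disposes of it in one line by citing the fact that a transcendental entire function always has periodic points of period $2$: if $\overline{D}_n$ lay in the unbounded complementary component of $f(\partial D_n)$, the winding number of $f|_{\partial D_n}$ about every point of $D_n$ would vanish, so $f(D_n)\cap D_n=\emptyset$, contradicting the presence of a period-$2$ orbit inside $D_n$ for $n$ large. You instead run the argument principle directly: the solution count $N(a)$ of $f(z)=a$ in $D_n$ is constant on the disc $\{|a|<m(r_n,f)\}$ (which misses the curve $f(\partial D_n)$), and evaluating at the test value $a=f(0)$, attained at $0\in D_n$ and satisfying $|f(0)|<m(r_n,f)$ for large $n$, gives $N\geq 1$ throughout, hence positive winding number about each point of $\overline{D}_n$. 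Your version is more self-contained --- it avoids appealing to the nontrivial Baker--Bergweiler theorem on periodic points --- at the cost of a slightly longer write-up; the paper's version buys brevity by outsourcing the work to that theorem. Both are valid, and both correctly handle the ``for sufficiently large $n$ only'' issue by discarding finitely many initial indices before applying Theorem~\ref{altdef}.
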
  

\begin{proof}
We may assume without loss of generality that the sequence $ (r_n) $ is strictly increasing.  Putting $ D_n = \lbrace z : \vert z \vert < r_n \rbrace, $ we then have $ \overline{D}_n \subset D_{n+1} $, for $ n \in \N $, and $ \bigcup_{n \in \N} D_n = \C $.  Moreover, since a \tef\ always has points of period $ 2 $, $ f(\partial D_n) $ must surround $ \overline{D}_n $ for sufficiently large $ n $.  The result now follows from Theorem \ref{altdef}.   
\end{proof}

To complete the proof of Theorem \ref{types}, we discuss in turn each of the four classes of functions listed in the theorem and show that they meet the condition in Lemma \ref{suffcon}.

First, we consider \tef s with a \mconn\ Fatou component (Theorem \ref{types}(a)).  We state some results on such components which are useful here and in subsequent sections of this paper.

The basic properties of \mconn\ Fatou components for a \tef\ were proved by Baker.

\begin{lemma} \cite[Theorem 3.1]{iB84}
\label{baker}
Let $ f $ be a \tef\ and let $ U $ be a \mconn\ Fatou component.  Then
\begin{itemize}
\item $ f^n(U) $ is bounded for any $ n \in \N $,
\item $ f^{n+1} (U) $ surrounds $ f^n(U) $ for large $ n $, and
\item $ \mathrm{dist }(0, f^n(U)) \to \infty $ as $ n \to \infty. $
\end{itemize}
\end{lemma}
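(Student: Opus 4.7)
The plan is to derive all three assertions from a single normality dichotomy: on any \mconn\ Fatou component $ U $ of a \tef, the iterates $ f^n $ must tend to infinity locally uniformly. Since $ U $ is \mconn, it contains a Jordan curve $ \gamma $ surrounding some bounded complementary component $ H $ of $ \C \setminus U $, and $ \partial H \subset \partial U \subset J(f) $ provides a point of $ J(f) $ inside $ \gamma $. If some subsequence $ (f^{n_k}) $ converged locally uniformly on $ U $ to a holomorphic limit, then $ (f^{n_k}) $ would be uniformly bounded on $ \gamma $, hence, by the maximum principle, uniformly bounded on the interior of $ \gamma $ as well. Montel's theorem would then force normality of $ \lbrace f^{n_k} \rbrace $ in a \nhd\ of a Julia point in $ \partial H $, which is impossible. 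Since $ \lbrace f^n \rbrace $ is normal on $ U $, the only remaining possibility is $ f^n \to \infty $ locally uniformly on $ U $.

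For part (1), I would first show that $ U $ is itself bounded, by a delicate topological use of the maximum principle: an unbounded \mconn\ Fatou component would admit loops of arbitrarily large modulus in $ U $, and a maximum-principle argument using $ \min_\gamma |f^n| \to \infty $ between such a loop and $ \gamma $ would contradict the finite limit of $ f^n $ attained along outer loops which lie in the same (Fatou) component. Once $ U $ is bounded, $ f^n(\overline U) $ is a continuous image of a compact set and so $ f^n(U) $ is bounded. For part (3), since $ \min_\gamma |f^n| \to \infty $ and $ f^n(U) $ is bounded, a minimum-principle argument applied either directly or via $ 1/f^n $ on subdomains of $ U $ on which $ f^n $ does not vanish shows that no point in $ f^n(U) $ remains close to $ 0 $ for large $ n $, giving $ \mathrm{dist}(0,f^n(U)) \to \infty $. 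For part (2), for large $ n $ the loop $ f^n(\gamma) $ consists entirely of points of arbitrarily large modulus; combining this with $ |f(z)| \to \infty $ on such loops and a winding-number argument, $ f^{n+1}(\gamma) = f(f^n(\gamma)) $ has positive winding number around every point of the bounded set $ f^n(U) $, so $ f^{n+1}(U) \supset f^{n+1}(\gamma) $ surrounds $ f^n(U) $.

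The main obstacle is making the surrounding claim in part (2) rigorous: the intuition that $ f $ wraps an already-large loop farther out requires careful control of the winding number of $ f(f^n(\gamma)) $ around arbitrary points of $ f^n(U) $, which depends on sharp joint estimates on both $ |f| $ and the argument of $ f $ along loops whose inner radius grows to infinity. A secondary technical concern is the boundedness of $ U $ itself: the topological sketch above needs to be underpinned by a precise argument, perhaps via Riemann--Hurwitz applied to the restriction of $ f $ to $ U $ and its image component, which may also be needed to ensure the surrounding property propagates inductively to all sufficiently large $ n $.
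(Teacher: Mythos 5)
The paper offers no proof of this lemma at all: it is quoted verbatim as Baker's Theorem~3.1 from \cite{iB84}, so your proposal has to be judged as an attempted proof of Baker's theorem rather than against an argument in the text. Your opening move --- the normality dichotomy forcing $f^n \to \infty$ locally uniformly on $U$ --- is indeed the standard first step, but the justification is slightly off: normality of the \emph{subfamily} $\{f^{n_k}\}$ near a point of $J(f)$ is not in itself a contradiction, since $J(f)$ is defined by non-normality of the full family. What you actually need is the blowing-up property: $f^n(\mathrm{int}\,\gamma)$ eventually covers every disc $\{ w : |w|\le R\}$ because $\mathrm{int}\,\gamma$ contains points of $J(f)$, so $\sup_{\mathrm{int}\,\gamma}|f^n|\to\infty$, and the maximum principle then rules out any subsequence of iterates that is uniformly bounded on $\gamma$.

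The other two assertions are where the real content of Baker's theorem lies, and your sketches for them would not survive being written out. For boundedness, "contradict the finite limit of $f^n$ attained along outer loops which lie in the same Fatou component" is incoherent once you have shown that $f^n\to\infty$ everywhere on $U$, and Riemann--Hurwitz gives no control here. Baker's proof is quantitative: one compares $\max_{\gamma}|f^n|$ with $\min_{\gamma}|f^n|$ using the hyperbolic metric (by Schwarz--Pick the curves $f^n(\gamma)$ have uniformly bounded hyperbolic diameter in their Fatou components, which bounds the max by a fixed power of the min), and plays this against the fact that $f^{n+1}(\mathrm{int}\,\gamma)$ contains the disc $\{|w|\le\min_\gamma|f^{n+1}|\}$ whose radius grows faster than any fixed power of $\min_\gamma|f^n|$ because $f$ is transcendental. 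It is exactly this max-versus-min comparison that puts $f^{n+1}(\gamma)$ entirely outside a disc containing $f^n(U)$; the winding-number half of the surrounding claim is the easy half (by the argument principle $f^{n+1}\circ\gamma$ winds at least once around $0$, since $f^{n+1}$ has a zero inside $\gamma$ by blowing up, and hence winds around every point of $\{|w|\le\min_\gamma|f^{n+1}|\}$). Note finally that the third bullet does not follow from locally uniform convergence to $\infty$ together with the minimum principle, because $U\cap\mathrm{int}\,\gamma$ is not relatively compact in $U$; it too rests on the nesting structure you have not established. So the proposal correctly locates the difficulties but does not overcome them.
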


Later results have shown that the iterates of a \mconn\ Fatou component eventually contain very large annuli.  The following special case of a result of Zheng \cite{Z2} is quoted in this form by Bergweiler, Rippon and Stallard in \cite{BRS10}.

\begin{lemma} \label{zheng}
Let $ f $ be a \tef\ with a \mconn\ Fatou component $ U $.  If $ A \subset U  $ is a domain containing a closed curve that is not null-homotopic in $ U $ then, for sufficiently large $ n \in \N, $
\[ f^n(U) \supset f^n(A) \supset \lbrace z \in \C : \alpha_n < \vert z \vert < \beta_n \rbrace, \]
where $ \beta_n/\alpha_n \to \infty $ as $ n \to \infty. $
\end{lemma}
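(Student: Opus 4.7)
The plan is to combine Baker's structural information about multiply connected Fatou components (Lemma~\ref{baker}) with the minimum modulus theorem for transcendental entire functions, turning the topological hypothesis on $\gamma$ into round annuli in $f^n(A)$ whose ratio of radii tends to infinity.

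First I would exploit the topological hypothesis. Since $\gamma \subset A$ is not null-homotopic in $U$, its bounded complementary component contains a point $\zeta \in \mathbb{C}\setminus U$; by invariance of winding number under continuous maps, $f^n(\gamma)$ winds non-trivially about $f^n(\zeta)$ for every $n$. Lemma~\ref{baker} gives that $f^n(U)$ is bounded, $f^{n+1}(U)$ surrounds $f^n(U)$ for large $n$, and $\mathrm{dist}(0,f^n(U))\to\infty$. Since $\zeta\notin U$, the orbit $f^n(\zeta)$ never enters the outward-moving sets $f^n(U)$, and a short argument with connectedness shows that for all sufficiently large $n$ the origin lies in a bounded complementary component of $f^n(\gamma)$. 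In particular $f^n(\gamma)$ surrounds $0$.

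Second, since $f^n(A)$ is open, connected, and contains the closed curve $f^n(\gamma)$ surrounding $0$, it contains a round annulus $\{\alpha_n<|z|<\beta_n\}$ centred at $0$, where we may take $\alpha_n$ and $\beta_n$ to be the extremal radii of such an annulus in the component of $f^n(A)$ carrying $f^n(\gamma)$. In particular $\alpha_n\le \min_{f^n(\gamma)}|z|$ and $\beta_n\ge \max_{f^n(\gamma)}|z|$.

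Third, and the technical heart, I would prove $\beta_n/\alpha_n \to \infty$. The relation $f^{n+1}(A)\supset f(f^n(A))\supset f(\{\alpha_n<|z|<\beta_n\})$, combined with the fact (established in step one) that the image annulus surrounds $0$ in the correct component of $f^{n+1}(A)$, yields the recursive bounds $\alpha_{n+1}\le M(\alpha_n,f)$ and $\beta_{n+1}\ge m(\beta_n,f)$, and more generally $\log(\beta_{n+1}/\alpha_{n+1})\ge \log(m(\beta_n,f)/M(\alpha_n,f))$. Applying the minimum modulus theorem, which for a transcendental entire function supplies a sequence $r_k\to\infty$ along which $\log m(r_k,f)/\log M(r_k,f) \to 1$, together with the transcendental growth $M(r,f)/r\to\infty$, forces this recursion to blow up.

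The main obstacle is making precise the recursion in the third step: one needs that $f$ sends the round annulus produced at stage $n$ into a region which still contains a round annulus of the expected size at stage $n+1$, rather than losing modulus to some topologically disjoint piece of $f^{n+1}(A)$. This is resolved by the winding-preservation from step one, which pins the relevant round annulus to the component of $f^{n+1}(A)$ containing $f^{n+1}(\gamma)$, together with a standard annulus lemma: for $r$ large the image $f(\{|z|=r\})$ surrounds every circle of radius strictly between $m(r,f)$ and $M(r,f)$, up to at most one bounded exceptional complementary component that can be shown to be irrelevant here because of the topological constraint on $0$.
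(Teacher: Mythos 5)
First, a point of comparison: the paper does not prove this lemma at all --- it is quoted verbatim as a special case of a theorem of Zheng \cite{Z2}, in the form given by Bergweiler, Rippon and Stallard \cite{BRS10} --- so your argument can only be measured against those sources, where the result is obtained by genuinely analytic means (Harnack's inequality applied to the positive harmonic functions $\log|f^n|$ on $U$, which are well defined and large for $n$ large because $\operatorname{dist}(0,f^n(U))\to\infty$, together with hyperbolic-metric estimates). Your proposal tries to replace this with soft topology plus a minimum modulus theorem, and both replacements fail.

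The decisive gap is Step 2: an open connected set containing a closed curve that surrounds $0$ need \emph{not} contain any round annulus centred at $0$. Take a thin neighbourhood of the boundary of the square $[-1,1]^2$: every circle $|z|=r$ with $1<r<\sqrt{2}$ leaves that neighbourhood, and every circle with $r<1$ misses it entirely. A fortiori your ``in particular'' claim, that the annulus may be taken to span the full radial extent of $f^n(\gamma)$, is false. Producing round annuli of unbounded modulus inside $f^n(A)$ is precisely the hard content of the lemma and cannot be extracted from openness, connectedness and a surrounding curve. Step 3 contains a second fatal error: there is no minimum modulus theorem for a general transcendental entire function supplying $r_k\to\infty$ with $\log m(r_k,f)/\log M(r_k,f)\to 1$; such statements (e.g.\ the $\cos\pi\rho$ theorem) require growth restrictions like order less than $1$, whereas functions with multiply connected Fatou components can have arbitrary, even infinite, order. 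For the functions at hand, the existence of radii on which $m$ is large is essentially a \emph{consequence} of this lemma (this is exactly how the paper deduces Theorem \ref{types}(a) from it), so invoking it would be circular; moreover your recursion needs lower bounds for $m$ at the specific radii $\beta_n$, not at radii of your own choosing. Finally, a smaller repair in Step 1: knowing that $f^n(\gamma)$ winds about $f^n(\zeta)$ does not locate $0$, since $f^n(\zeta)$ may itself tend to infinity while remaining inside the curves. The standard route is to apply the blowing-up property of $J(f)$ to the interior of the filled curve (which contains a point of $\partial U\subset J(f)$ because $\gamma$ is not null-homotopic in $U$), giving $0\in f^n(\text{inside of }\gamma)$ for large $n$, and to combine this with $\operatorname{dist}(0,f^n(\gamma))\to\infty$ and the inclusion $\partial f^n(W)\subset f^n(\partial W)$ to conclude that $f^n(\gamma)$ surrounds $0$.
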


Maintaining the notation of Lemmas \ref{baker} and \ref{zheng}, it follows that, for sufficiently large $ n $, 
\[ f^{n+1}(U) \textrm{ surrounds } f^n(U) \textrm{ which contains } \lbrace z \in \C : \alpha_n < \vert z \vert < \beta_n \rbrace.\]
Thus, for these values of $ n $, $ m(r,f) > r $ whenever $ \alpha_n < r < \beta_n $, so the condition in Lemma \ref{suffcon} is satisfied.

Next, we consider \tef s of growth not exceeding order ~$ \tfrac{1}{2} $, minimal type (Theorem \ref{types}(b)).  If
$ M(r,f) := \max{ \lbrace \vert f(z) \vert : \vert z \vert = r \rbrace }, $ the \textit{order} $ \rho(f) $ and the \textit{type} $ \tau(f) $ of an entire function $ f $ are defined by
\[ \rho(f) := \limsup_{r \to \infty} \dfrac{\log \log M(r,f)}{\log r},\]
and
\[ \tau(f) := \limsup_{r \to \infty} \dfrac{\log M(r,f)}{r^\rho}.\]
If $ \tau(f) = 0, $ $ f $ is said to be of \textit{minimal type}.

The following lemma implies Theorem \ref{types}(b) immediately.   

\begin{lemma}
\label{minunbound}
Let $ f $ be a \tef\ of growth not exceeding order $ \tfrac{1}{2}, $ minimal type, and let $ n \in \lbrace0, 1, \ldots \rbrace $.  Then
\[ \limsup_{r \to \infty} \dfrac{m(r,f)}{r^n} = \infty. \]
\end{lemma}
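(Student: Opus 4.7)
The plan is to reduce the claim to a classical minimum-modulus estimate for entire functions of slow growth, and then to leverage the transcendence of $ f $ to upgrade a comparison between $ m(r,f) $ and $ M(r,f) $ into a comparison between $ m(r,f) $ and $ r^n $.

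First, I would invoke (with suitable attribution to, e.g., Wiman's theorem, or a consequence of the $ \cos\pi\rho $ theorem as stated in Hayman's monograph on meromorphic functions) the following fact: for any \tef\ $ f $ of growth not exceeding order $ \tfrac{1}{2} $, minimal type, there exist an unbounded sequence $ (r_j) $ of positive reals and a constant $ \eta > 0 $ (for instance $ \eta = \tfrac{1}{2} $) such that
\[ \log m(r_j, f) \; \geq \; \eta \log M(r_j, f), \qquad j \in \N. \]
For order strictly less than $ \tfrac{1}{2} $ this is immediate from the classical $ \cos\pi\rho $ theorem, since $ \cos\pi\rho > 0 $; in the borderline case $ \rho = \tfrac{1}{2} $ one exploits the minimal type hypothesis via Wiman's sharper estimate to extract such a sequence.

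Second, I would use the fact that, since $ f $ is transcendental, $ M(r,f) $ grows faster than any polynomial, that is, $ \log M(r,f)/\log r \to \infty $ as $ r \to \infty $. Combining this with the previous bound gives, along the sequence $ (r_j) $,
\[ \frac{\log m(r_j, f)}{\log r_j} \; \geq \; \eta \cdot \frac{\log M(r_j, f)}{\log r_j} \; \to \; \infty, \]
so that $ m(r_j, f) / r_j^n \to \infty $ for every fixed $ n \in \lbrace 0, 1, \ldots \rbrace $. This is the required statement.

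The main obstacle is the borderline case $ \rho = \tfrac{1}{2} $, where $ \cos\pi\rho = 0 $ and the crude $ \cos\pi\rho $ theorem yields no positive lower bound at all. In that regime the minimal type hypothesis is essential, and one must appeal to a refined minimum modulus result valid specifically for entire functions of order $ \tfrac{1}{2} $ minimal type rather than to the Wiman/Valiron theorem in its general form.
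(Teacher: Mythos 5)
Your second step is sound, and for order strictly less than $\tfrac{1}{2}$ your first step is also sound: the $\cos\pi\rho$ theorem gives arbitrarily large $r$ with $\log m(r,f)\ge\eta\log M(r,f)$ for any $\eta<\cos\pi\rho$, and combining this with $\log M(r,f)/\log r\to\infty$ finishes that case. The genuine gap is exactly where you place the ``main obstacle'': at order $\tfrac{1}{2}$, minimal type, there is no refined minimum modulus theorem of the form you invoke. The sharp results in this borderline case (Heins, Kjellberg; the version of Wiman's theorem proved in Titchmarsh, which is what the paper cites) give only $\limsup_{r\to\infty}m(r,f)=\infty$ --- that is, the conclusion corresponding to the constant $\cos(\pi/2)=0$ --- and not $\log m(r_j,f)\ge\eta\log M(r_j,f)$ for a fixed $\eta>0$. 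Indeed, if one assumes $\log m(r,f)<\cos(\pi\lambda)\log M(r,f)$ for all large $r$ with some $\lambda<\tfrac{1}{2}$, Kjellberg's theorem only forces $\log M(r,f)/r^{\lambda}$ to tend to a positive or infinite limit, which is entirely compatible with $f$ having order $\tfrac{1}{2}$, minimal type; so no contradiction is available from the general theory. Moreover, for functions of order $\tfrac{1}{2}$ with regularly distributed negative zeros the leading terms of $\log m(r,f)$ and $\log M(r,f)$ cancel (this is what $\cos(\pi/2)=0$ is recording; compare $\sinh(\pi\sqrt{z})/(\pi\sqrt{z})$), and this cancellation persists for minimal type, so one expects $\log m(r,f)=o(\log M(r,f))$ for such functions. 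Your route therefore cannot close the case $\rho=\tfrac{1}{2}$.

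The standard argument for $n>0$ --- the one the paper points to via Hinkkanen --- avoids comparison with $M(r,f)$ altogether. Since $f$ is a \tef\ of order less than $1$, it has infinitely many zeros (otherwise $f=Pe^{Q}$ with $Q$ a polynomial, forcing $f$ to be a polynomial). Choose zeros $z_1,\dots,z_n$ of $f$ and set $g(z)=f(z)/\prod_{j=1}^{n}(z-z_j)$. Then $g$ is a \tef\ whose growth again does not exceed order $\tfrac{1}{2}$, minimal type, so the case $n=0$ gives an unbounded sequence $(r_j)$ with $m(r_j,g)\to\infty$; since
\[
m(r,f)\;\ge\; m(r,g)\,\bigl(r-\max_{j}|z_j|\bigr)^{n},
\]
it follows that $m(r_j,f)/r_j^{\,n}\to\infty$. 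You should replace your first step in the order $\tfrac{1}{2}$, minimal type case by this reduction (or an equivalent subharmonic-function argument); as written, the appeal to ``Wiman's sharper estimate'' asserts a statement that is not a theorem.
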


This well-known result is proved for the case $ n = 0 $ and $ \rho(f) < \tfrac{1}{2} $ in \cite[p. 274]{Ti}.  The proof in the case of order $ \tfrac{1}{2}, $ minimal type, is similar, and the case $ n > 0 $ follows by a standard argument; see, for example, \cite[p.193]{AH}.   

Finally, we consider \tef s of finite order and with Fabry gaps (Theorem \ref{types}(c)) or with a sufficiently strong version of the pits effect (Theorem \ref{types}(d)).

A \tef\ $ f $ has \textit{Fabry gaps} if 
\[ f(z) = \sum_{k = 0}^\infty a_kz^{n_k}, \]
where $ n_k/k \to \infty $ as $ k \to \infty. $  Loosely speaking, a function exhibits the \textit{pits effect} if it has very large modulus except in small regions (pits) around its zeros.  For further details of the pits effect, we refer to the discussion in \cite[Section 8]{RS10a}.

It is noted in \cite[Section 8]{RS10a} that, if $ f $ has finite order and Fabry gaps, or if $ f $ exhibits a sufficiently strong version of the pits effect, then for some $ p > 1 $ and all sufficiently large $ r $
\begin{equation}
\label{minmodcond}
\textrm{ there exists } r' \in (r, r^p) \textrm{ with } m(r', f) \geq M(r, f). 
\end{equation} 
It follows that, for these functions also, the condition in Lemma \ref{suffcon} is satisfied.  This completes the proof of Theorem \ref{types}.

\begin{remark*}
It is noted in \cite[Section 8]{RS10a} that (\ref{minmodcond}) also holds for certain functions of infinite order which satisfy a suitable gap series condition.  Evidently, these functions also are \spl.  
\end{remark*}

\section{Examples for which $ K(f) $ is totally disconnected} 
\label{examples}
\setcounter{equation}{0}

In this section and the next we illustrate our results with a number of examples.  The examples in this section are of \tef s for which $ K(f) $ is totally disconnected.  In Section 6, we give an example of a \tef\ for which $ K(f) $ has a component with empty interior which is not a singleton. 

\begin{example} 
\label{bakerdom}
Let $ f $ be the \tef\ constructed by Baker and Dom\'{i}nguez in \cite[Theorem G]{BD1}.  Then $ K(f) $ is totally disconnected.  
\end{example}

\begin{proof}
The function $ f $ constructed in \cite[Theorem G]{BD1} takes the form
\[ f(z) = k \prod_{n = 1}^\infty \left( 1 + \dfrac{z}{r_n} \right)^2, \quad 0 < r_1 < r_2 < \cdots, \quad k > 0, \]
where the constants $ k $ and $ r_n, n \in \N, $ are chosen so that $ f(x) > x $ for $ x \in \R $ and so that the annuli
\[ A_n = \left \lbrace z : 2r_n^2 < \vert z \vert < \left ( \dfrac{r_{n+1}}{2} \right )^{1/2} \right \rbrace \]
are disjoint, with $ f(A_n) \subset A_{n+1} $ for large $ n $ (we refer to \cite[proof of Theorem G]{BD1} for details of the construction).  

As noted in \cite{BD1}, $ f $ has order zero.  Thus $ f $ is \spl, by Theorem \ref{types}(b).  Furthermore, the construction ensures that $ f(x) > x $ for $ x \in \R $, so it is easy to see that $ \R \subset I(f) $.  Since all critical points of $ f $ lie on the negative real axis, it follows that none are in $ K(f) $ and hence that $ K(f) $ is totally disconnected by Corollary \ref{totdisc}(b).
\end{proof}

The function in Example \ref{bakerdom} has \mconn\ Fatou components.  This fact gives an alternative method of showing that $ K(f) $ is totally disconnected by using results due to Kisaka \cite{K3} (see \cite[Section 5]{O10} for a discussion of these results).  Recall that a \textit{buried point} is a point in the Julia set that does not lie on the boundary of a Fatou component, and that a \textit{buried component} of the Julia set is a component consisting entirely of buried points. In \cite[Corollary D]{K3} Kisaka proved that, if a \tef\ has a \mconn\ Fatou component and each critical point has an unbounded forward orbit, then every component of the Julia set with bounded orbit must be a buried singleton component.  In \cite[Example E]{K3}, he showed that this result applies to the function $ f $ in Example \ref{bakerdom}.  Since, for this function, no component of $ J(f) $ with bounded orbit meets the boundary of a Fatou component, it follows that $ K(f) \subset J(f) $ and hence that $ K(f) $ is totally disconnected.       

In our next example, $ K(f) $ is again totally disconnected, but this time $ f $ has no \mconn\ Fatou components.  

\begin{example} 
\label{infprod}
Define $ f $ by
\[ f(z) = \prod_{n = 1}^\infty \left( 1 + \dfrac{z}{2^n} \right)^2. \]
Then $ K(f) $ is totally disconnected.   Moreover, $ f $ has no \mconn\ Fatou components. 
\end{example}

\begin{proof}
Since the zeros of $ f $ are at $ z = -2^n, n \in \N, $ $ f $ has order zero and so is \spl\ by Theorem \ref{types}(b).  Furthermore, for $ x \in \R, $
\[ f(x) \geq \left( 1 + \dfrac{x}{2} \right)^2 > x \]
so that $ \R \subset I(f). $  Since all critical points of $ f $ lie on the negative real axis, it follows that none of them are in $ K(f) $.  Thus $ K(f) $ is totally disconnected by Corollary \ref{totdisc}(b).

Now suppose that some component $ U $ of $ F(f) $ is \mconn. Then, for large $ n, $ we have
\[ f^{n+1}(U) \textrm{ surrounds }  f^n(U) \textrm{ which surrounds } 0   \]
by Lemma \ref{baker}, so that $ f^n(U) $ contains no zeros of $ f $ for large $ n $.  However, by Lemma \ref{zheng}, $ f^n(U) $ contains an annulus $ \lbrace z : \alpha_n < \vert z \vert < \beta_n \rbrace $ for large $ n $, where $ \beta_n/\alpha_n \to \infty $ as $ n \to \infty.$   Since the zeros of $ f $ are at $ z = -2^n, n \in \N, $ this is a contradiction and it follows that $ f $ has no \mconn\ Fatou components.
\end{proof}

In Examples \ref{bakerdom} and \ref{infprod} the critical points of $ f $ lie outside $ K(f) $.  This is not essential for $ K(f) $ to be totally disconnected, and in our next example all of the critical points are inside $ K(f) $.

\begin{example}
\label{KSThmB}
Let $ f $ be the \tef\ constructed by Kisaka and Shishikura in \cite[Theorem B]{KS}.  Then $ K(f) $ is totally disconnected.  Moreover, each critical point of $ f $ lies in a strictly preperiodic component of $ K(f)$.
\end{example}

\begin{proof}
In \cite[Theorem B]{KS}, Kisaka and Shishikura used quasiconformal surgery to construct a \tef\ $ f $ with a doubly connected Fatou component which remains doubly connected throughout its orbit.  It follows from Theorem \ref{types}(a) that $ f $ is \spl.

Now the construction of $ f $ in \cite{KS} ensures that all the critical values of $ f $ map to $ 0 $, which is a repelling fixed point.  Furthermore, each critical value of $ f $ lies in the unbounded complementary component of at least one doubly connected Fatou component that surrounds $ 0 $.  Thus the component $ K_0 $ of $ K(f) $ containing $ 0 $ cannot include a critical point, for if it did $ f(K_0) $ would meet a doubly connected Fatou component, which is a contradiction.  Hence each critical point lies in a component of $ K(f) $ which differs from $ K_0 $ and is strictly preperiodic.  It follows from Corollary ~\ref{totdisc}(b) that $ K(f) $ is totally disconnected.
\end{proof}

\begin{remark*}
Recall from Section \ref{intro} that a \tef\ $ f $ is \spl\ whenever the set $ A_R(f) $ is a \sw\ (we again refer to \cite{RS10a} for an explanation of the terminology used here).  In fact, it follows from \cite[Theorem 1.9(a)]{RS10a} that, if $ R>0 $ is such that $ M(r,f)>r $ for $ r \geq R $, then $ A_R(f) $ is a \sw\ for each of the functions in Examples \ref{bakerdom} and \ref{KSThmB}.  Furthermore, it can be shown using \cite[Theorem 1.9(b)]{RS10a} that $ A_R(f) $ is a \sw\ for the function in Example \ref{infprod} (we omit the details). 
\end{remark*}

For our final example in this section, we exhibit a \tef\ which is not \spl, but for which $ K(f) $ is totally disconnected.

\begin{example}
\label{fatou}
Let $ f $ be the function
\[ f(z) = z + 1 + e^{-z}, \]
first investigated by Fatou \cite[Example 1]{F}.  Then $ K(f) $ is totally disconnected.
\end{example}

\begin{proof}
Recall that $ F(f) $ is a completely invariant Baker domain in which $ f^n(z) \to \infty $ as $ n \to \infty $.  As stated in  \cite[Example 3]{RS09}, it can be shown using a result of Bara\'{n}ski \cite[Theorem C]{Bar}, together with the fact that $ f $ is the lift of $ g(w) = (1/e)we^{-w} $ under $ w = e^{-z} $, that:
\begin{itemize}
\item $ J(f) $ consists of uncountably many disjoint simple curves, each with one finite endpoint and the other endpoint at $ \infty $, and
\item $ I(f) \cap J(f) $ consists of the open curves and some of their finite endpoints.
\end{itemize}
Thus all points in $ F(f) $ and all points on the curves to infinity in $ J(f) $, together with some of their finite endpoints, lie in the escaping set $ I(f). $  It follows that $ K(f) $ is a subset of the finite endpoints of the curves to infinity in $ J(f) $.  Thus, if the set of finite endpoints of these curves is totally disconnected, then $ K(f) $ is totally disconnected.

Now it follows from \cite[Theorem 1.5]{BJR} that $ J(f) $ is a Cantor bouquet, in the sense of being ambiently homeomorphic to a subset of $ \R^2 $ known as a straight brush (we refer to \cite{AO, BJR} for a detailed discussion of these ideas).  Now Mayer \cite[Theorem~3]{M} has shown that, if $ h(z) = \lambda e^z, 0 < \lambda < 1/e, $ the set of finite endpoints of $ J(h) $ is totally disconnected.  Since $ J(h) $ is also is a Cantor bouquet, it is ambiently homeomorphic to $ J(f) $.  We conclude that the set of finite endpoints of $ J(f) $ is totally disconnected, and this completes the proof.   
\end{proof}

\section{A non-trivial component of $ K(f) $ with empty interior} 
\label{further}
\setcounter{equation}{0}

In this section, we construct a \tef\ for which $ K(f) $ has a component with no interior that is not a singleton.

We obtain a function with the desired property by modifying a quasiconformal surgery construction of Bergweiler \cite{wB11}, which is itself based on an approach used by Kisaka and Shishikura in \cite{KS} (for which see also Example \ref{KSThmB} above).

The construction uses the following two lemmas on quasiregular mappings -  for background on such mappings we refer to \cite{SR}.  

\begin{lemma} \cite[Theorem 3.1]{KS}, \cite[Lemma 1]{wB11}  
\label{qent}
Let $ g : \C \to \C $ be a quasiregular mapping.  Suppose that there are disjoint measurable sets $ E_j \subset \C,  j \in \N, $ such that:
\begin{enumerate}[(a)]
\item for almost every $ z \in \C $, the $ g $-orbit of $ z $ meets $ E_j $ at most once for every $ j $;
\item $ g $ is $ K_j $-quasiregular on $ E_j; $
\item $ K_\infty := \prod_{j = 1}^\infty K_j < \infty; $
\item $ g $ is analytic almost everywhere outside $ \bigcup_{j = 1}^\infty E_j. $
\end{enumerate}
Then there exists a $ K_\infty $-quasiconformal mapping $ \phi : \C \to \C  $ such that $ f = \phi \circ g \circ \phi^{-1} $ is an entire function.
\end{lemma}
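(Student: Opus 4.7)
The plan is to construct a $g$-invariant Beltrami coefficient $\mu$ on $\C$ with $\|\mu\|_\infty < 1$ and then invoke the Measurable Riemann Mapping Theorem of Ahlfors and Bers to produce the required quasiconformal homeomorphism $\phi$. The $g$-invariance condition $g^{*}\mu = \mu$ almost everywhere is precisely what forces the conjugate $f := \phi \circ g \circ \phi^{-1}$ to have identically zero complex dilatation; since $f$ is a continuous map of $\C$ into itself, Weyl's lemma then upgrades it to an entire function.

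The heart of the argument is the construction of $\mu$. I would spread the standard complex structure backwards along $g$-orbits, accumulating distortion only when the orbit passes through some $E_j$. Set $\nu_0 \equiv 0$ and inductively define $\nu_{n+1}$ as the pullback $g^{*}\nu_n$, using the standard transformation rule for Beltrami coefficients under composition. By hypothesis (d), pullback leaves the pointwise dilatation unchanged almost everywhere on $\C \setminus \bigcup_j E_j$, and by hypothesis (b), pullback through $E_j$ multiplies the pointwise dilatation factor by at most $K_j$. Hypothesis (a) ensures that for almost every $z$ the forward orbit $z, g(z), g^2(z), \ldots$ enters each $E_j$ at most once, so the dilatation factor of $\nu_n(z)$ is bounded by $K_\infty < \infty$ uniformly in $n$, giving the essential bound $\|\nu_n\|_\infty \leq (K_\infty - 1)/(K_\infty + 1) < 1$.

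I would then extract the pointwise limit $\mu := \lim_{n \to \infty} \nu_n$. For almost every $z$, only the finitely many iterates of $z$ lying in some $E_j$ with $j$ in a prescribed finite range contribute to the pullback, so $\nu_n(z)$ stabilises as $n$ grows, and a dominated-convergence argument yields a measurable $\mu$ satisfying the same uniform bound. By construction $g^{*}\mu = \mu$ almost everywhere, so the Measurable Riemann Mapping Theorem supplies a $K_\infty$-quasiconformal $\phi : \C \to \C$ with Beltrami coefficient $\mu$. The chain rule for Beltrami coefficients then confirms that $\phi \circ g \circ \phi^{-1}$ has trivial dilatation almost everywhere, and being continuous on $\C$ it is entire.

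The main obstacle is the measure-theoretic bookkeeping: one must verify that the pullback operation is well-defined and measurable at each stage (so that measurability propagates through the inductive definition of the $\nu_n$), and that the accumulation of distortion is controlled \emph{multiplicatively} by $\prod_j K_j$ rather than by some uncontrolled product $\prod_j K_j^{n_j}$ with $n_j \to \infty$. Hypothesis (a) is the structural assumption that rules out the latter scenario, while hypothesis (c) converts the resulting bound on dilatation into a Beltrami coefficient norm that is strictly less than $1$, which is exactly what is needed for the Measurable Riemann Mapping Theorem to apply.
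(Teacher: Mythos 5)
Your overall strategy is exactly the one behind this lemma in the sources the paper cites (the paper itself gives no proof of Lemma \ref{qent}, only the references to Kisaka--Shishikura and Bergweiler): pull back the standard structure along orbits, use (a)--(d) to bound the accumulated dilatation by $K_\infty$, invoke the Measurable Riemann Mapping Theorem to get a $K_\infty$-quasiconformal integrating map $\phi$, and finish with Weyl's lemma. So the architecture is right and matches the standard ``fundamental lemma of quasiconformal surgery''.

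There is, however, one step in your write-up that is not correct as stated: the claim that for almost every $z$ the sequence $\nu_n(z)$ \emph{stabilises} because only finitely many iterates of $z$ meet $\bigcup_j E_j$. Hypothesis (a) only says each orbit meets each individual $E_j$ at most once; it does not prevent an orbit from entering infinitely many \emph{distinct} sets $E_j$. Indeed, in the very application in Section 6 of this paper, an escaping orbit passes through every annular set $E_n$ exactly once, so $\nu_n(z)$ is genuinely modified at infinitely many stages and never stabilises. Dominated convergence does not rescue this either, since the issue is convergence of the pointwise values, not integrability. The correct argument is that the extra distortion introduced at stage $n$ is controlled by $K_{j_n}$, where $g^n(z)\in E_{j_n}$ and the indices $j_n$ are pairwise distinct; since $\prod_j K_j<\infty$, the tail products $\prod_{j>J}K_j$ tend to $1$, which makes $(\nu_n(z))$ a Cauchy sequence (e.g.\ in the hyperbolic metric on the unit disc of Beltrami coefficients, on which pullback by a conformal map acts isometrically). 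With that replacement the limit $\mu$ exists almost everywhere, satisfies $\vert\mu\vert\le (K_\infty-1)/(K_\infty+1)$ and $g^*\mu=\mu$, and the rest of your argument goes through.
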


In Lemma \ref{qreg}, $ \log $ denotes the principal branch of the logarithm.
 
\begin{lemma} \cite[Lemma 6.2]{KS}
\label{qreg}
Let $ k \in \N, \, 0 < r_1 < r_2, $ and for $ j = 1,2, $ let $ \phi_j $ be analytic on a \nhd\ of $ \lbrace z : \vert z \vert = r_j \rbrace $ and such that $ \phi_j \vert_{\vert z \vert = r_j} $ goes round the origin $ k $ times. If
\begin{equation}
\label{first}
\left \vert \log \left( \dfrac{\phi_2(r_2 e^{iy})}{r_2^k}\dfrac{r_1^k}{\phi_1(r_1 e^{iy})} \right) \right \vert \leq \delta_0 
\end{equation}
and
\begin{equation}
\label{second}
\left \vert z \dfrac{d}{dz} \left( \log \dfrac{\phi_j(z)}{z^k} \right) \right \vert \leq \delta_1, \quad z = r_j e^{iy}, \quad j = 1,2, 
\end{equation}
hold for every $ y \in (- \pi, \pi] $ and for some positive constants $ \delta_0 $ and $ \delta_1 $ satisfying
\begin{equation}
\label{third}
C = 1 - \dfrac{1}{k} \left( \dfrac{\delta_0}{\log (r_2/r_1)} + \delta_1 \right) > 0, 
\end{equation}
then there exists a quasiregular mapping 
\[ H : \lbrace z: r_1 \leq \vert z \vert \leq r_2 \rbrace \to \C \setminus \lbrace 0 \rbrace \]
without critical points such that $ H = \phi_j $ on $ \lbrace z : \vert z \vert = r_j \rbrace, j = 1,2, $ and satisfying
\[ K_H \leq \dfrac{1}{C}. \]
\end{lemma}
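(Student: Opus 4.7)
The plan is to construct $H$ explicitly by linearly interpolating $\log(\phi_j(z)/z^k)$ across the annulus in logarithmic coordinates, and then to read off its complex dilatation from the hypotheses. Since $\phi_j$ winds $k$ times around the origin on $\{|z|=r_j\}$, the function $\phi_j(z)/z^k$ has winding number zero there, so there is a single-valued analytic branch $L_j(z):=\log(\phi_j(z)/z^k)$ on a \nhd\ of that circle. Setting $A_j(y):=L_j(r_j e^{iy})$, a smooth $2\pi$-periodic function of $y$, hypothesis \eqref{first} becomes $|A_2(y)-A_1(y)|\le \delta_0$, while hypothesis \eqref{second} becomes $|A_j'(y)|\le \delta_1$ (the chain rule gives $A_j'(y)=iz L_j'(z)$ at $z=r_je^{iy}$, and the two moduli agree).

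Next I would pass to the logarithmic coordinate $\zeta=s+iy$, $s=\log r$, in which the annulus becomes the horizontal strip $\log r_1\le s\le \log r_2$, and define $H$ via the lift
\[
\tilde H(\zeta):=k\zeta+B(s,y),\qquad B(s,y):=(1-t)A_1(y)+tA_2(y),\qquad t:=\frac{s-\log r_1}{\log(r_2/r_1)}.
\]
Setting $H(z):=\exp\tilde H(\log z)$ produces a map on the annulus agreeing with $\phi_j$ on $|z|=r_j$; since the $A_j$ are $2\pi$-periodic, $\tilde H(\zeta+2\pi i)=\tilde H(\zeta)+2\pi i k$, so $\exp\tilde H$ descends to a well-defined map on the annulus.

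The main calculation is the dilatation estimate. A direct computation gives $\tilde H_s=k+(A_2(y)-A_1(y))/\log(r_2/r_1)$ and $\tilde H_y=ik+B_y(s,y)$, so
\[
\tilde H_{\bar\zeta}=\tfrac{1}{2}\left(\frac{A_2-A_1}{\log(r_2/r_1)}+iB_y\right),\qquad \tilde H_\zeta=k+\tfrac{1}{2}\left(\frac{A_2-A_1}{\log(r_2/r_1)}-iB_y\right).
\]
Writing $E:=\delta_0/\log(r_2/r_1)+\delta_1$, the hypotheses give $|\tilde H_{\bar\zeta}|\le E/2$ and $|\tilde H_\zeta|\ge k-E/2$; the inequality $C=1-E/k>0$ from \eqref{third} thus forces $|\tilde H_\zeta|>0$ throughout, so the Jacobian of $\tilde H$ is strictly positive and $H$ has no critical points. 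The Beltrami coefficient satisfies $|\mu_{\tilde H}|\le E/(2k-E)$, and a short calculation gives
\[
K_{\tilde H}\le \frac{1+E/(2k-E)}{1-E/(2k-E)}=\frac{2k}{2k-2E}=\frac{1}{1-E/k}=\frac{1}{C};
\]
since $z\mapsto\log z$ is conformal, $K_H=K_{\tilde H}\le 1/C$, as required.

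The only real obstacle I anticipate is the bookkeeping in the dilatation estimate: it is precisely the factor $2k$ in the denominator of $|\tilde H_\zeta|$, rather than $k$, that converts the naive bound into the sharp $1/C$. This sharpness depends on the interpolation being strictly linear in $\log r$ (not $r$), so that $\partial_s B$ contains only the difference $A_2-A_1$ divided by $\log(r_2/r_1)$, with no extra Jacobian factor.
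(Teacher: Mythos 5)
This lemma is quoted verbatim from Kisaka--Shishikura (\cite[Lemma 6.2]{KS}) and the paper gives no proof of it, so there is nothing internal to compare against; your argument has to stand on its own, and it does. The construction --- a single-valued branch $L_j=\log(\phi_j/z^k)$ on each boundary circle, linear interpolation of $L_1,L_2$ in the logarithmic coordinate, and conjugation by $\exp$ --- is correct and is in fact essentially the argument of \cite{KS}. Your Wirtinger computation is right: $\tilde H_{\bar\zeta}=\tfrac12\bigl(\tfrac{A_2-A_1}{\log(r_2/r_1)}+iB_y\bigr)$ has modulus at most $E/2$ while $|\tilde H_\zeta|\ge k-E/2$, giving $|\mu|\le E/(2k-E)$ and hence $K\le 2k/(2k-2E)=1/C$, with the positive Jacobian supplying both quasiregularity and the absence of critical points; conformality of $\exp$ and $\log$ transfers the bound to $H$. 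The only point worth tightening is the identification of hypothesis \eqref{first} with $|A_2(y)-A_1(y)|\le\delta_0$: the branches of $L_1$ and $L_2$ are each determined only up to $2\pi ik$-type constants, and \eqref{first} is stated with the principal branch, so you should fix the branch of $L_2$ (say) so that $A_2-A_1$ agrees with the principal value at one point of the circle and note that, the quotient having winding number zero, this agreement persists for all $y$. This is cosmetic and does not affect the estimates.
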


We now give the details of the construction of a \tef\ with the desired property.
 
\begin{example}
\label{qcs}
There exists a \tef\ $ f $ such that $ K(f) $ has a component which has empty interior but which is not a singleton.
\end{example}

\begin{proof}
We first define a quasiregular mapping $ g $ and then obtain the required entire function $ f $ using Lemma \ref{qent}.  

In Bergweiler's construction \cite{wB11}, sequences $ (a_n) $ and $ (R_n) $  are chosen in such a way that $ z \mapsto a_nz^{n+1} $ maps $ \textrm{ann} (R_n, R_{n+1}) $ onto $ \textrm{ann} (R_{n+1}, R_{n+2}) $, where  
\[ \textrm{ann} (r_1, r_2) := \lbrace z \in \C : r_1 < \vert z \vert < r_2 \rbrace, \quad r_2 > r_1 > 0. \]
The mapping $ g $ is then defined by $ g(z) = a_nz^{n+1} $ on a large subannulus of $ \textrm{ann} (R_n, R_{n+1}) $ for each $ n \in \N $, and by interpolation using \cite[Lemma 6.3]{KS} (see also \cite[Lemma 2]{wB11}) in the annuli containing the circles $ \lbrace z : \vert z \vert = R_n \rbrace $ that lie between these subannuli.  We modify Bergweiler's construction only on a disc surrounding the origin.

First we define the boundaries of the various annuli we will need.  Here we follow Bergweiler precisely but we give the details for convenience.  Set $ R_0 = 1. $ Choose $ R_1 > R_0 $ and put
\[ R_{n+1} := \dfrac{R_n^{n+1}}{R_{n-1}^n} \]
for $ n \in \N $.  With $ \gamma = \log R_1 $ we then have
\[ \log \dfrac{R_{n+1}}{R_n} = n \log \dfrac{R_n}{R_{n-1}} = \cdots = n!\log \dfrac{R_1}{R_0} = \gamma n!. \]
Now define sequences $ (P_n), (Q_n), (S_n) $ and $ (T_n) $ by
\begin{equation}
\label{logs}
\log \dfrac{T_n}{S_n} =  \log \dfrac{S_n}{R_n} =  \log \dfrac{R_n}{Q_n} =  \log \dfrac{Q_n}{P_n} = \sqrt{\log \dfrac{R_{n+1}}{R_n}} = \sqrt{\gamma n!}. 
\end{equation}
Setting $ R_1 > e $ gives $ \gamma > 1 $ and so
\[ \dfrac{T_n}{S_n} = \dfrac{S_n}{R_n} = \dfrac{R_n}{Q_n} = \dfrac{Q_n}{P_n} > e.  \]
We also have
\[ \begin{split}
\log \dfrac{P_{n+1}}{T_n} & = - \log \dfrac{Q_{n+1}}{P_{n+1}} - \log \dfrac{R_{n+1}}{Q_{n+1}} + \log \dfrac{R_{n+1}}{R_n} - \log \dfrac{S_n}{R_n} - \log \dfrac{T_n}{S_n} \\
& = -2 \sqrt{\gamma (n+1)!} + \gamma n! - 2 \sqrt{\gamma n!} > 0,\\
\end{split} \]
provided $ R_1 $ and hence $ \gamma $ is sufficiently large.  It follows that
\[ P_n < Q_n < R_n < S_n < T_n < P_{n+1} \]
for all $ n \in \N. $

Now, again following Bergweiler, define sequences $ (a_n) $ and $ (b_n) $ as follows:
\[ a_n := \dfrac{R_{n+1}}{R_n^{n+1}} = \dfrac{1}{R_{n-1}^n}, \]
and
\[ b_n := - \dfrac{(n+1)^2}{n+2} \left( \dfrac{n+1}{n} \right)^n a_n. \]

We will show that there is a quasiregular mapping $ g: \C \to \C $ with the following properties:
\begin{enumerate}[(i)]
\item $ g(z) = z^2 - 2 $ for $ \vert z \vert \leq S_1 $;
\item $ g(z) = a_nz^{n+1} $ for $ T_n \leq \vert z \vert \leq P_{n+1}, n \geq 1 $;
\item $ g(z) = b_n(z-R_n)z^n $ for $ Q_n \leq \vert z \vert \leq S_n, n \geq 2 $;
\item $ g $ is $ K_n $-quasiregular in $ E_n $ for $ n \geq 1 $, where
\[ E_n = \textrm{ann} (S_n, T_n) \cup \textrm{ann} (P_{n+1}, Q_{n+1}) \quad \textrm{and} \quad K_n = 1 + \dfrac{1}{n^2}; \]
\item $ g(\textrm{ann} (S_n, Q_{n+1})) \subset \textrm{ann} (S_{n+1}, Q_{n+2}) $ for $ n \geq 1. $
\end{enumerate} 

Our mapping $ g $ differs from the quasiregular mapping constructed by Bergweiler in \cite{wB11} only in the disc $ \lbrace z : \vert z \vert \leq P_2 \rbrace $.  Bergweiler's mapping was set equal to $ z^2 $ throughout this disc (since $ a_1 = 1 $), whereas our mapping is equal to $ z^2 $ only in the closure of $ \textrm{ann} (T_1, P_2) $ and we have introduced the new function $ z^2 - 2 $ in the smaller disc $ \lbrace z : \vert z \vert \leq S_1 \rbrace. $  Thus Bergweiler's proof that his mapping has the stated properties applies without amendment to our mapping $ g $, but we need to carry out an additional interpolation between the functions $ z^2 - 2 $ and $ z^2 $ in order to define $ g $ in $ \textrm{ann} (S_1, T_1) $.  We also need to check that property (v) still holds for $ n = 1. $

To define $ g $ in $ \textrm{ann} (S_1, T_1) $ we apply Lemma \ref{qreg} with 
\[ \phi_1(z) = z^2 - 2, \quad  \phi_2(z) = z^2, \quad r_1 = S_1 \textrm{ and } r_2 = T_1. \]
Evidently $ k =2 $ in Lemma \ref{qreg}, so (\ref{first}) becomes
\[
\left \vert \log \left( \dfrac{T_1^2 e^{2iy}}{T_1^2}\dfrac{S_1^2}{S_1^2 e^{2iy} - 2} \right) \right \vert
 = \left \vert \log \left( 1 - \dfrac{2}{S_1^2} e^{-2iy} \right) \right \vert.
\] 
Now as $ y $ runs through the interval $ ( -\pi, \pi] $, the point $ z = 1 - \dfrac{2}{S_1^2} e^{-2iy} $ traces out a small circle with centre $ 1 $ (note that $ S_1 > e R_1 > e^2 $).  Thus for such $ z $ we have 
\[ \vert z \vert \leq 1 + \dfrac{2}{S_1^2}\] 
and
\[ \vert \arg z \vert \leq \sin^{-1} \dfrac{2}{S_1^2} \leq \dfrac{\pi}{S_1^2}, \]
so $ \log \vert z \vert < \dfrac{2}{S_1^2}  $ and 
\[ \vert \log z \vert < \sqrt{ \dfrac{4}{S_1^4} + \dfrac{\pi^2}{S_1^4}}  < \dfrac{4}{e^4}. \]
It follows that (\ref{first}) is satisfied with $ \delta_0 = \dfrac{4}{e^4}. $

Moreover for $ j = 1, $ (\ref{second}) becomes
\[ \left \vert z \dfrac{d}{dz} \left( \log \dfrac{z^2 - 2}{z^2} \right) \right \vert = \dfrac{4}{\vert z^2 - 2 \vert } \]
where $ z = S_1e^{iy}. $  But
\[  \dfrac{4}{\vert z^2 - 2 \vert } \leq \dfrac{4}{S_1^2 - 2} < \dfrac{4}{e^4 - 2} \]
so that (\ref{second}) is satisfied with $ \delta_1 = \dfrac{4}{e^4 - 2}. $  For $ j = 2 $, (\ref{second}) is satisfied for any $ \delta_1 > 0 $.

With these values of $ \delta_0 $ and $ \delta_1 $, (\ref{third}) gives
\[ C = 1 - \dfrac{1}{2} \left( \dfrac{4}{e^4 \log (T_1/S_1)} + \dfrac{4}{e^4 - 2} \right) > \dfrac{1}{2}. \]
It follows that there exists a quasiregular mapping
\[ H : \lbrace z: S_1 \leq \vert z \vert \leq T_1 \rbrace \to \C \setminus \lbrace 0 \rbrace \]
without critical points such that $ H(z) = z^2 - 2 $ on $ \lbrace z : \vert z \vert = S_1 \rbrace $ and $ H(z) = z^2 $ on $ \lbrace z : \vert z \vert = T_1 \rbrace $, satisfying
\[ K_H \leq 2. \]
Thus, putting $ g(z) = H(z) $ in $ \textrm{ann} (S_1, T_1) $ we see that (iv) holds for all $ z \in E_1 $, since our definition of $ g $ coincides with Bergweiler's on $ \textrm{ann} (P_2, Q_2) $.  

Next, we check that (v) still holds for $ z \in \textrm{ann} (S_1, Q_2). $  Since our quasiregular mapping $ g $ agrees with Bergweiler's on $ \lbrace z : \vert z \vert = Q_2 \rbrace $, his argument that $ \vert g(z) \vert \leq Q_3 $ for $ z \in \textrm{ann} (S_1, Q_2) $ (which uses the maximum principle) continues to hold.  It therefore remains to show that, for such $ z $, we have $ \vert g(z) \vert \geq S_2. $

Now $ g $ has no zeros in $ \textrm{ann} (S_1, Q_2) $ so if $ z \in \textrm{ann} (S_1, Q_2) $ we have
\[ \vert g(z) \vert \geq \min_{\vert \zeta \vert = S_1} \vert \zeta^2 - 2 \vert \geq S_1^2 - 2, \] 
by the minimum principle.  Moreover, since $ R_1 = e^\gamma $ we have $ S_1 = R_1 e^{\sqrt{\gamma}} = e^{\gamma + \sqrt{\gamma}} $ by (\ref{logs}), and therefore
\[ \vert g(z) \vert \geq e^{2\gamma + 2\sqrt{\gamma}}  - 2 \]
for $ z \in \textrm{ann} (S_1, Q_2) $.
Now 
\[ \log \dfrac{S_2}{R_1} = \log \dfrac{R_2}{R_1} + \log \dfrac{S_2}{R_2}  = \gamma + \sqrt{2 \gamma},  \]
so that $ S_2 = R_1 e^{\gamma + \sqrt{2 \gamma}} = e^{2 \gamma + \sqrt{2 \gamma}}. $  It follows that we can ensure that $ \vert g(z) \vert > S_2 $ for $ z \in \textrm{ann} (S_1, Q_2) $ by choosing $ \gamma $ sufficiently large, and  (v) will then still hold. 

Our mapping $ g $ and the sets $ E_j, j \in \N, $ therefore meet the conditions of Lemma \ref{qent}, and we conclude that there exists a $ K_\infty $-quasiconformal mapping $ \phi : \C \to \C $ such that $ f = \phi \circ g \circ \phi^{-1} $ is an entire function.  Now it follows from (v) that $ g^n(z) \to \infty $ as $ n \to \infty $ for $ z \in \textrm{ann} (S_1, Q_2). $  However, inside the disc $ \lbrace z : \vert z \vert \leq S_1 \rbrace $ the iterates of $ g $ are the iterates of $ z^2 - 2 $.  In particular, the interval $ [-2, 2] $ is invariant under iteration by $ g $ and contains the critical point $ 0 $, whilst for all $ z \in \lbrace z : \vert z \vert \leq S_1 \rbrace \setminus [-2,2] $ there must be some $ N \in \N $ such that $ \vert g^N(z) \vert > S_1 $.

It follows that $ \phi (\textrm{ann} (S_1, Q_2)) $ lies in a \mconn\ component $ U $ of $ F(f) $, whilst $ \phi ([-2,2]) $ is an invariant Jordan arc which is a subset of a component $ K $ of $ K(f) $ containing a critical point.  Now suppose that $ K $ contains some point $ w \notin  \phi ([-2,2]). $  Then there exists $ N \in \N $ such that $ f^N(w) $ lies outside the image under $ \phi $ of the disc $ \lbrace z : \vert z \vert \leq S_1 \rbrace $.  However, as $ f^N(K) $ is connected, this means that $ f^N(K) $ meets $ U $, which is a contradiction since $ U \subset I(f) $ by Lemma \ref{baker}.  Thus $ K $ is a component of $ K(f) $ with empty interior.  This completes the proof.
\end{proof}

\begin{remarks*}
\begin{enumerate}[1.]
\item It follows from \cite[Theorem 1.1(c)]{O10} that every \nhd\ of $ K $ contains a \mconn\ Fatou component that surrounds $ K $, and that $ K $ is a buried component of $ J(f) $.  Since $ f $ is \spl, there are at most countably many components of $ K(f) $ with empty interior that are not singletons by Corollary \ref{totdisc}(a).  
\item Since we have modified Bergweiler's construction only inside the disc $ \lbrace z : \vert z \vert \leq P_2 \rbrace $, the conclusions of \cite{wB11} still hold, and $ f $ has both simply and \mconn\ wandering Fatou components. 
\end{enumerate}
\end{remarks*}

\textbf{Acknowledgements} The author wishes to express his thanks to his doctoral supervisors, Phil Rippon and Gwyneth Stallard, for their assistance in the preparation of this paper, to Alastair Fletcher and Dan Nicks for useful ideas which helped with the proof of Theorem \ref{branhubbgen}, and to Lasse Rempe for suggesting the result in Theorem \ref{uncount}(a).


\begin{thebibliography}{99}

\bibitem{AO} J.M. Aarts and L.G. Oversteegen, The geometry of Julia sets, {\it Trans. Amer. Math. Soc.}, 338 (1993), no.2, 897--918.



\bibitem{iB84} I.N. Baker, Wandering domains in the iteration of entire functions, {\it Proc. London Math. Soc.} (3), 49 (1984), 563--576.


\bibitem{BD1} I.N. Baker and P. Dom\'{i}nguez, Some connectedness properties of Julia sets, {\it Complex Variables Theory Appl.}, 41 (2000), no.4, 371--389.

\bibitem{BD2} I.N. Baker and P. Dom\'{i}nguez, Residual Julia sets, {\it J. Analysis}, 8 (2000), 121--137.


\bibitem{Bar} K. Bara\'{n}ski, Trees and hairs for some hyperbolic entire maps of finite order, {\it Math Z.}, 257 (2007), 33--59.

\bibitem{BKZ} K. Bara\'{n}ski, B. Karpi\'{n}ska and A. Zdunik, Hyperbolic dimension of Julia sets of meromorphic maps with logarithmic tracts, {\it Int. Math. Res. Not.}, IMRN 2009, 615--624.

\bibitem{BJR} K. Bara\'{n}ski, X. Jarque and L. Rempe, Brushing the hairs of \tef s, {\it Topology Appl.}, 159 (2012), 2102--2114.


\bibitem{aB} A.F.~Beardon, \textit{Iteration of rational functions}, Graduate Texts in Mathematics 132, Springer-Verlag, 1991.

\bibitem{wB93} W. Bergweiler, Iteration of meromorphic functions, {\it Bull. Amer. Math. Soc.}, 29 (1993), 151--188.


\bibitem{wB10} W. Bergweiler, On the set where the iterates of an entire function are bounded, {\it Proc. Amer. Math. Soc.}, 140 (2012), 847--853.

\bibitem{wB11} W. Bergweiler, An entire function with simply and multiply connected wandering domains, {\it Pure Appl. Math. Quarterly}, 7 (2011), 107--120 [Special issue for F.W. Gehring].



\bibitem{BRS10} W. Bergweiler, P.J. Rippon and G.M. Stallard, Multiply connected wandering domains of entire functions. To appear in Proc. London Math. Soc. \textit{arXiv:}1109.1794


\bibitem{CG} L. Carleson and T.W. Gamelin, {\it Complex dynamics}, Springer-Verlag, 1993.




\bibitem{PD1} P. Dom\'inguez, Connectedness properties of Julia sets of {\tef}s, {\it Complex Variables}, 32 (1997), 199--215.



\bibitem{DH} A. Douady and J.H. Hubbard, On the dynamics of polynomial-like mappings, {\it Ann. Scient. \'{E}c. Norm. Sup}, 4th s\'{e}rie, t.18 (1985) 287--343.


\bibitem{EL} A.E. Eremenko and M.J. Ljubich, Examples of entire functions with pathological dynamics, {\it J. London Math. Soc.} (2) 36 (1987), 458-468.

\bibitem{F} \text{P. }Fatou, Sur l'it\'{e}ration des fonctions transcendantes enti\`{e}res, {\it Acta Math.}, 47 (1926), 337--360.


\bibitem{AH} A. Hinkkanen, Entire functions with bounded Fatou components, in {\it Transcendental dynamics and complex analysis}, P.J. Rippon and G.M. Stallard (eds.), Cambridge University Press, 2008, 187--216.

\bibitem{HW} W. Hurewicz and H. Wallman, {\it Dimension theory}, Princeton University Press, New Jersey, 1948.



\bibitem{K3} M. Kisaka, Julia components of \tef s with \mconn\ \wand s. Preprint.

\bibitem{KS} M. Kisaka and M. Shishikura, On multiply connected wandering domains of entire functions, in {\it Transcendental dynamics and complex analysis}, P.J. Rippon and G.M. Stallard (eds.), Cambridge University Press, 2008, 217--250.

\bibitem{KvS} O. Kozlovski and S. van Strien, Local connectivity and quasi-conformal rigidity of non-renormalizable polynomials, \textit{Proc. London Math. Soc.}, 99, No. 2, (2009), 275--296.

\bibitem{M} J.C. Mayer, An explosion point for the set of endpoints of the Julia set of $ \lambda \exp (z) $, \textit{Ergodic Theory Dynam. Systems}, 10, (1990), 177--183.

\bibitem{Mc} C. McMullen, Automorphisms of rational maps, in {\it Holomorphic functions and moduli I}, D. Drasin et al. (eds.), Springer-Verlag, 1988, 31--60.



\bibitem{Mil} J. Milnor, {\it Dynamics in one complex variable}, Third Edition, Princeton University Press, 2006.





\bibitem{O10} \text{J.W. }Osborne, The structure of spider's web fast escaping sets, \textit{Bull. London Math. Soc.}, 44 (3) (2012), 503--519.


\bibitem{QY} W. Qiu and Y. Yin, Proof of the Branner-Hubbard conjecture on Cantor Julia sets, {\it Science in China Series A : Mathematics}, Vol.52, No.1 (Jan 2009) 45--65.

\bibitem{R11} L. Rempe, Connected escaping sets of exponential maps, {\it Ann. Acad. Sci. Fenn. Math.}, 36 (2011), 71--80.

\bibitem{SR} S. Rickman, \textit{Quasiregular mappings}, Ergebnisse der Mathematik und ihrer Grenzgebiete 26, Springer, 1993.


\bibitem{RS09} P.J. Rippon and G.M. Stallard, Slow escaping points of meromorphic functions, {\it Trans. Amer. Math. Soc.}, 363 (8) (2011), 4171--4201.



\bibitem{RS10a} P.J. Rippon and G.M. Stallard, Fast escaping points of entire functions. \textit{Proc. London Math. Soc.} doi: 10.1112/plms/pds001.

\bibitem{RS11} P.J. Rippon and G.M. Stallard, Boundaries of escaping Fatou components, \textit{Proc. Amer. Math. Soc.}, 139, No. 8, (2011), 2807--2820.


\bibitem{RS12} P.J. Rippon and G.M. Stallard, A sharp growth condition for a fast escaping spider's web. Preprint.  \textit{arXiv:}1208.3371v1.

\bibitem{RY1} P. Roesch and Y. Yin, The boundary of bounded polynomial Fatou components, \textit{C.R. Acad. Sci. Paris, Ser. I}, 346 (2008), 877--880.

\bibitem{RY2} P. Roesch and Y. Yin, Bounded critical Fatou components are Jordan domains, for polynomials. Preprint, \textit{arXiv:}0909.4598v2.


\bibitem{SU} D. Sullivan, Quasiconformal homeomorphisms and dynamics I: solution of the Fatou-Julia problem on wandering domains, \textit{Annals of Mathematics}, 122 (1985), pp. 401--418. 



\bibitem{Ti} E.C. Titchmarsh, {\em The theory of functions}, Oxford Univ. Press, 2nd edition, 1939.



\bibitem{Z0} J.H. Zheng, Unbounded domains of normality of entire functions of small growth, \textit{Math. Proc. Camb. Phil. Soc.}, 128 (2000), pp. 355--361. 

\bibitem{Z1} J.H. Zheng, Singularities and wandering domains in iteration of meromorphic functions, \textit{Illinois Journal of Mathematics}, Vol 44, No. 3 (2000), 520--530. 

\bibitem{Z2} J.H. Zheng, On multiply-connected Fatou components in iteration of meromorphic functions, \textit{J. Math. Anal. Appl.}, 313 (2006), 24--37. 


\end{thebibliography}
\end{document}